\newtheorem{teo}{Theorem}
\newtheorem{prop}[teo]{Proposition}
\newtheorem{lema}[teo]{Lemma}
\newtheorem{cor}[teo]{Corollary}
\theoremstyle{definition} 
\newtheorem{obs}[teo]{Remark}
\DeclareMathOperator{\V}{SL}
\newcommand{\seq}{\xrightarrow{\tau}}
\newcommand{\N}{_n(\mathbb{R})}
\newcommand{\dif}{\mathop{}\!\mathrm{d}}
\newcommand{\NN}{(\mathbb{R})}
\newcommand{\K}{\mathcal{K}^n}
\newcommand{\RR}{(\mathbb{R}^n)}
\DeclareMathOperator{\conv}{conv}
\DeclareMathOperator{\Conv}{Conv}
\DeclareMathOperator{\I}{I}
\DeclareMathOperator{\CD}{Conv_{cd}}
\DeclareMathOperator{\PA}{Conv_{p.a}}
\DeclareMathOperator{\LQ}{_{l.q}}
\DeclareMathOperator{\dom}{dom}
\DeclareMathOperator{\interior}{int}
\DeclareMathOperator{\D}{D^2}
\DeclareMathOperator{\epi}{epi}
\DeclareMathOperator{\LC}{Conv_{ld}}
\DeclareMathOperator{\conc}{Conc([0,\infty))}
\DeclareMathOperator{\proj}{proj}
\author[Fernanda M. Baêta]{Fernanda Moreira Baêta}
\address{\tiny{INSTITUT FÜR DISKRETE MATHEMATIK UND GEOMETRIE, TECHNISCHE UNIVERSITÄT WIEN, WIEDNER HAUPTSTRASSE 8-10/1046, 1040 WIEN, AUSTRIA}}
\email{fernanda.baeta@tuwien.ac.at}
\subjclass[2020]{52B45, 52A41, 26A51, 26A16}
\title[Upper semicontinuous valuations on convex functions]{Upper semicontinuous valuations on convex functions of one variable}
\pgfplotsset{compat=1.18} 
\begin{document}
\pretolerance10000
\begin{abstract}
A classification of upper semicontinuous, translation  and dually epi-translation invariant valuations   is established on the space of convex Lipschitz function on $\mathbb{R}$ with compact domain.
\end{abstract}
\maketitle

\section{Introduction  and statement of the main result}
Let $\mathcal{K}^n$ be the space of  \textit{convex bodies}, that is,  non-empty compact convex sets in  $n$-dimensional Euclidean space $\mathbb{R}^n$.  A functional $Z:\mathcal{K}^n\rightarrow\mathbb{R}$ is called a \textit{valuation} if
$$Z(K\cup L)+Z(K\cap L) = Z(K)+Z(L)$$
whenever $K,L,K\cup L\in\mathcal{K}^n$. The most familiar examples of valuations are the intrinsic volumes  $V_0, \dots,V_{n}$, which include the $n$-dimensional volume, $V_n$, the Euler characteristic, $V_0$, and the surface area, $2V_{n-1}$. They  have useful properties such as: continuity with respect to the Hausdorff metric, invariance under rigid motions, monotonicity and homogeneity (see, for example, \cite{Vitor}). Valuations have many applications in integral geometry (see \cite{klain}) and  play an important role in convex geometry (see \cite{mullen, mullen2}). 

Let $\V\N$ denote the special linear group of $\mathbb{R}^n$.  We say that a valuation $Z$ is invariant under volume preserving maps if $Z(\phi K +x)= Z(K)$ for every vector $x\in\mathbb{R}^n$, every $ \phi\in \V\N$, and every convex body $K\in\K$. Specifically, when this equality holds only for rotations  $\phi$, we say that $Z$ is a rigid motion invariant valuation. We consider continuous and upper semicontinuous valuations, where $\mathcal{K}^n$ and its subspaces are equipped with the topology induced by the Hausdorff metric.

The interest in classifying valuations on $\mathcal{K}^{n}$ originates with Blaschke \cite{Blaschke1937} and was later developed by Hadwiger. Probably the most famous result in this area  is the Hadwiger characterization theorem (1957), which classifies all continuous and rigid motion invariant valuations on the space $\mathcal{K}^n$ (see \cite{81}). Specifically, it states that any such valuation can be written as a linear combination of the intrinsic volumes $V_0, V_1, \dots, V_n$. In 1999, Ludwig and Reitzner established  an affine version of Hadwiger’s theorem,  providing a classification of upper semicontinuous valuations that are  invariant under volume preserving maps (see \cite{71', monika}).  Further characterizations of valuations with interesting invariance properties were obtained, for example, in \cite{alesker,haberl, haberl2, 81,klain,L, mullen,mullen2}.

The following result,  due to    Ludwig \cite{71},  classifies  rigid motion invariant and upper semicontinuous valuations defined on $\mathcal{K}^2$. Consider the set
\begin{multline*}
\conc=\\
\left\{\zeta: [0,\infty)\rightarrow [0,\infty)\mid \ \zeta \ \mbox{is concave}, \ \lim_{t\rightarrow 0}\zeta(t)=0,  \mbox{ and } \lim_{t\rightarrow +\infty}\zeta(t)/t =0\right\}.    
\end{multline*}

\begin{teo}[\cite{71}]\label{generalize}
A functional  $Z:\mathcal{K}^2\rightarrow \mathbb{R}$ is an upper semicontinuous and rigid motion invariant valuation if and only if there exist constants $c_0,c_1,c_2\in\mathbb{R}$ and a function $\zeta\in \conc$ such that 
\begin{align*}
 Z(K) = c_0V_0(K) + c_1V_1(K)+c_2 V_2(K) + \int_{S^1} \zeta(\rho(K,u))\dif\mathcal{H}^1(u)  
\end{align*}
for every $K\in\mathcal{K}^2$.
\end{teo}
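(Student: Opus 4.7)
The plan splits into sufficiency, which is routine, and necessity, which does the real work. For sufficiency, $V_0,V_1,V_2$ are classical continuous, rigid-motion invariant valuations on $\mathcal{K}^2$; for the integral functional $K \mapsto \int_{S^1}\zeta(\rho(K,u))\dif\mathcal{H}^1(u)$, rigid-motion invariance follows from the equivariance of $\rho$ together with the rotation invariance of $\mathcal{H}^1$ on $S^1$; the valuation property follows from a pointwise identity that holds for each $u$ whenever $K,L,K \cup L \in \mathcal{K}^2$; upper semicontinuity comes from Fatou's lemma, with the growth condition $\zeta(t)/t \to 0$ supplying an integrable envelope and the boundary condition $\zeta(t) \to 0$ as $t \to 0$ handling degenerate limits.

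For necessity I would restrict $Z$ successively to richer classes of bodies. On singletons, rigid-motion invariance forces $Z(\{x\}) = c_0$. On a segment of length $t$, $Z$ is a function $g(t)$; the valuation identity applied to two collinear segments sharing an endpoint yields $g(s+t) = g(s)+g(t)-g(0)$, from which upper semicontinuity extracts a linear piece $c_1 t$ plus a nonlinear remainder to be recognized later as a boundary term. Passing to polygons, any convex polygon can be assembled from triangles by gluings along edges, and the valuation identity together with rigid-motion invariance force
\[
Z(P) = c_0V_0(P) + c_1V_1(P) + c_2V_2(P) + \sum_{i=1}^m \zeta(\ell_i)
\]
for some function $\zeta$ with $\zeta(0)=0$, where $\ell_1,\dots,\ell_m$ are the edge lengths of $P$ and $c_2$ is pinned down by evaluating $Z$ on a fixed reference body. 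Gluings in which a shared edge partially cancels produce functional inequalities that force $\zeta$ to be midpoint-concave, hence (with upper semicontinuity) concave; degenerating a thin triangle to a segment gives $\lim_{t\to 0}\zeta(t)=0$, while evaluating $Z$ on elongated triangles and comparing against $c_1 V_1$ forces $\lim_{t\to\infty}\zeta(t)/t=0$, so $\zeta \in \conc$. Finally, polygons are dense in $\mathcal{K}^2$ in the Hausdorff metric, and the polygonal edge-sum coincides with $\int_{S^1}\zeta(\rho(P_k,u))\dif\mathcal{H}^1(u)$ under the natural interpretation of $\rho$ on polygons; weak convergence of the relevant boundary measures on $S^1$, the $\conc$ conditions on $\zeta$, and upper semicontinuity of $Z$ together deliver the integral representation on all of $\mathcal{K}^2$.

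The main obstacle is isolating $\zeta$ and pinning down its properties in the polygonal stage. Concavity, the behaviour at zero, and the sublinear growth at infinity all have to be extracted purely from the valuation identity and upper semicontinuity, and this requires selecting gluings and degenerations of polygons that isolate exactly the intended functional constraint on $\zeta$ without introducing confounding area or linear edge contributions. Once $\zeta$ is shown to lie in $\conc$, the density of polygons together with the prescribed growth and boundary conditions makes the final limiting step essentially automatic.
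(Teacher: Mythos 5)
This statement is not proved in the paper at all: it is Ludwig's classification theorem for convex discs, quoted from \cite{71}, and the paper only adapts elements of that proof to the functional setting in Section 3. Judged on its own merits, your plan has a genuine structural flaw in the necessity direction. For a polygon $P$ the curvature radius $\rho(P,\cdot)$ vanishes $\mathcal{H}^1$-almost everywhere on $S^1$, so the term $\int_{S^1}\zeta(\rho(K,u))\dif\mathcal{H}^1(u)$ is identically zero on polygons; moreover the valuation identity itself kills any nonlinear edge-length contribution (split a triangle by a cevian through a vertex: the shared side of length $\ell$ splits as $\ell_1+\ell_2$ and the identity forces $\zeta(\ell_1)+\zeta(\ell_2)=\zeta(\ell_1+\ell_2)$, i.e.\ linearity, which is then absorbed into $c_1V_1$). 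Hence no information about $\zeta$ can be extracted from polygons, and your polygonal stage cannot produce the function $\zeta$ of the theorem. In Ludwig's proof (mirrored in this paper by the definition $\zeta(a)=\tfrac1{2m}Z(aq+\I_{[-m,m]})$ in \eqref{eq4}), $\zeta$ is read off from bodies with curved boundary of prescribed curvature --- circular arcs in the body case, quadratic functions in the functional case --- and concavity and the limit conditions are then forced by upper semicontinuity applied to carefully constructed approximations (cf.\ Lemmas \ref{lemma1a}--\ref{lemma1}). Your final step is also not ``essentially automatic'': since the integral term vanishes on polygons, approximating $K$ by polygons and invoking upper semicontinuity only yields $Z(K)\ge\limsup_k Z(P_k)$ with no $\zeta$-term on the right; the reverse inequality is the heart of the proof and requires approximating $K$ by bodies that are piecewise circular (respectively piecewise linear-quadratic, as in Lemmas \ref{lemma2}--\ref{singular} and Proposition \ref{prop1}) adapted to the second-order behaviour of $\partial K$ at normal points, via a Vitali covering argument.

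The sufficiency sketch is also not sound as stated. Fatou's lemma gives the wrong inequality for upper semicontinuity, and in any case $\rho(K_k,u)$ does not converge pointwise to $\rho(K,u)$ under Hausdorff convergence (polygons approximating a disc have $\rho\equiv 0$ a.e.), so there is no pointwise limit to which a dominated/Fatou argument could apply. The upper semicontinuity of $K\mapsto\int_{S^1}\zeta(\rho(K,u))\dif\mathcal{H}^1(u)$ is a substantive theorem that genuinely uses the concavity of $\zeta$ (it fails for non-concave $\zeta$); in the literature it is established, e.g., by writing the functional as an infimum of continuous functionals or by delicate approximation, and the functional analogue used in this paper is Theorem \ref{mainteo}, imported from \cite{integral} rather than derived from Fatou.
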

\noindent Here,  $S^1$ is the unit sphere in $\mathbb{R}^2$, while $\rho(K,u)$ is the curvature radius of the boundary of $K$ at the point with normal $u\in S^1$, and $\mathcal{H}^1$ is the 1-dimensional Hausdorff measure.

Recently, the notion of valuations has been extended to families of functions due to their close relationship with valuations on convex bodies. If $X$ is a space of extended real-valued functions,  a function $Z: X\to \mathbb{R}$ is a valuation if 
$$Z(u\wedge v)+ Z(u\vee v)= Z(u) + Z(v)$$
for every $u,v\in X$ such that also $u\wedge v, u\vee v\in X$, where $u\wedge v$ and $u\vee v$ denote the
pointwise  minimum and maximum of $u$ and $v$, respectively. Characterization results for valuations on various function spaces have been provided, including spaces of convex functions \cite{ irmn, H1, M1}, Lebesgue $L_p$ spaces \cite{TS}, and Sobolev spaces \cite{ludwig2}. In this paper, we focus on real-valued valuations defined on  spaces of convex functions.

We say that a function $u:\mathbb{R}^n\rightarrow(-\infty,+\infty]$ is \textit{lower semicontinuous} if its epigraph is a closed subset of $\mathbb{R}^{n+1}$, and it is a \textit{proper} function if the (effective) \textit{domain} of $u$,
$$\dom u =\{x\in\mathbb{R}^n\mid \ u(x)<+\infty\},$$
is non-empty. We denote by 
$$\Conv(\mathbb{R}^n)=\{u: \mathbb{R}^n\rightarrow (-\infty, +\infty]\mid \ u \  \mbox{is l.s.c and convex}, \ u\not\equiv +\infty\} $$
the space of  proper, lower semicontinuous, convex functions defined on $\mathbb{R}^n$, and  by 
\begin{align*}
\Conv(\mathbb{R}^n;\mathbb{R})= \{v:\mathbb{R}^n\to \mathbb{R}\mid \ v \text{ is convex}\}    
\end{align*}
the space of finite-valued, convex functions on $\mathbb{R}^n$.

Throughout the text, we will say that $u\in \Conv(\mathbb{R}^n)$ is Lipschitz on $\interior(\dom u)$ and, for short, we say just Lipschitz, if there exists a constant $L_u>0$ such that 
\begin{align}\label{3..}
 |u(x)-u(y)|\leq L_u\|x-y\|   
\end{align}
for every $x,y\in \interior (\dom u)$, where    $\interior (A) $  denotes the interior a set $A\subset\mathbb{R}^n$, and   $\|y\|$ denotes the Euclidean norm of $y$. We consider the subset of functions
\medskip
\begin{align*}
 \LC\RR=
\{u\in \Conv(\mathbb{R}^n)\mid \ \dom u \mbox{ is compact},\, u \mbox{ is Lipschitz on }\interior (\dom u)\}.
\end{align*}

\noindent Note that if $u \in \Conv(\mathbb{R}^n)$ and $\dom u$ is compact but not full-dimensional, then $u \in \LC\RR$ without necessarily being Lipschitz.

We equip $\LC\RR$ with the following notion of convergence (cf.~\cite{integral}):  We say that $u_k\seq u$ (or that $u_k$ is $\tau$-convergent to $u$) if  $u_k,u\in \LC\RR$ and
\begin{enumerate}
\item[(i)] for every sequence $x_k$ that converges to $x$, we have $$u(x)\leq \liminf_{k\rightarrow +\infty} u_k(x_k);$$
\item[(ii)] there exists a sequence $x_k$ that converges to $x$ such that 
$$u(x)= \lim_{k\rightarrow +\infty}u_k(x_k);$$
\item[(iii)] letting $L_{u_k}$ denote the Lipschitz constant of $u_k$  on the interior of its domain, the sequence $\{L_{u_k}\}$ is uniformly bounded by some constant $M>0$, independent of $k$.
\end{enumerate}
\noindent  The convergence (i) and (ii) is called \textit{epi-convergence}. Moreover, if $u\in \Conv(\mathbb{R}^n)$ and  $\dom u$  has non-empty interior, 
then $u_k\in \Conv(\mathbb{R}^n)$ epi-converges to $u$ if and only if $u_k$ converges uniformly to $u$ on every every compact set that does not contain  a boundary point of $\dom u$ (see [\citealp{76}, Theorem 7.17]). Condition (iii) for the space of real-valued Lipschitz continuous maps defined on the unit sphere $S^{n-1}$
has been considered  in  \cite{topology, topology2,topology3},  with the additional requirement of uniform convergence on the unit sphere.

A valuation $Z:\LC\RR\rightarrow \mathbb{R}$ is called \textit{translation invariant}  if 
$$Z(u\circ \tau)= Z(u)$$
for every $u\in \LC\RR$ and translation $\tau$ in $\mathbb{R}^n$, and  $\V\N$ \textit{invariant} if
$$Z(u\circ \phi)=Z(u)$$
for every $u\in \LC\RR$ and $ \phi\in \V\N$. We define $Z: \LC\RR\rightarrow \mathbb{R}$ to be $\tau$-\textit{upper semicontinuous} if, for every sequence $u_k$ that is  $\tau$-convergent to $u$,
$$Z(u)\geq \limsup_{k\rightarrow +\infty} Z(u_k).$$
We say that $Z$ is \textit{dually translation invariant} if  $Z(u+l)= Z(u)$ for every linear function $l$ and for every $u\in \LC\RR$, and  \textit{vertically translation} invariant if $Z(u+c)=Z(u)$ for every $u\in \LC\RR$ and every constant $c$. When $Z$ is both dually translation invariant and vertically translation invariant, we say that $Z$ is \textit{dually epi-translation} invariant.

Let $D\subset \mathbb{R}^n$ be a convex set. For a convex function $u:D\rightarrow (-\infty, +\infty]$, let  $ \D u(x)$ denote the Hessian matrix of $u$ at $x$ if $u$ is twice differentiable at $x$ and $\D u(x)=0$ otherwise.  By  Aleksandrov \cite{alek}, a convex function $u$ is twice differentiable almost everywhere.  The functional 
\begin{align}\label{func}
    Z(u)=\int_{D}(\det\D u(x))^{\frac{1}{n+2}}\dif x,
\end{align}
is known as the \emph{affine area functional} (see \cite{TW2,TW6,TW8}), 
because for bounded $D$ it equals the affine surface area of the 
graph of $u$.  Note that  $\zeta(t)=t^{1/(n+2)}$ belongs to $\conc$, and the functional $Z$  defined in \eqref{func} is  an $\V\N$, translation and dually epi-translation invariant valuation, and by \cite[Theorem 1]{integral}, it is $\tau$-upper semicontinuous.  For affine surface area of convex bodies, see  \cite{monika, lutwak, floating}; for a recent survey, see \cite{E}. By [\citealp{TW8}, Lemma 2.3], the functional
\begin{align*}
    u\mapsto \int_{D}\det\D u(x)\dif x,
\end{align*}
is the absolutely continuous part of the well-known  \textit{Monge--Ampère measure} on $D$, with respect to the Lebesgue measure (see, for example,  \cite{99}).

Our goal is to establish a functional version of Theorem~\ref{generalize}. 
While Theorem~\ref{generalize} focuses on valuations on convex bodies in the two-dimensional setting, 
we now consider convex functions on $\mathbb{R}$, that is, the case $n=1$. 
Note that although the variable is one-dimensional, the epigraph of a convex function in $\LC\NN$ is a convex subset of $\mathbb{R}^2$, so from a geometric perspective we are still dealing with a two-dimensional problem.
However, the functional framework does not allow us to exploit additional structures such as invariance under $\mathrm{SL}_2(\mathbb{R})$. 
This makes classification in the one-dimensional functional setting more delicate; for example, previous results (see, e.g., \cite{irmn,M1}) require $n\ge 2$ precisely for these reasons.  The following theorem provides a classification result in this setting.

\begin{teo}\label{maintheorem}
A functional  $Z:\LC\NN\rightarrow\mathbb{R}$ is  a $\tau$-upper semicontinuous, translation and dually epi-translation invariant valuation  if and only if there are constants $c_0,c_1\in\mathbb{R}$ and a  function   $\zeta\in\conc$ such that
\begin{align}\label{A}
Z(u)= c_0+c_1V_1(\dom u)+\int_{\dom u}\zeta(u''(x))\dif x
\end{align}
for every $u\in \LC\NN$.
\end{teo}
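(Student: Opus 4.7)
The plan is to establish the ``if'' direction by direct verification and then, for ``only if,'' to pin down $c_0, c_1, \zeta$ on explicit test functions and extend to all of $\LC\NN$ by $\tau$-density. For the \emph{if} direction, one checks that the functional in \eqref{A} is a valuation: at every point $x$ where $u,v,u\vee v,u\wedge v$ are twice differentiable (which holds a.e.~on $\dom u\cap \dom v$), $\{(u\vee v)''(x),(u\wedge v)''(x)\}=\{u''(x),v''(x)\}$ as multisets, so $\zeta((u\vee v)''(x))+\zeta((u\wedge v)''(x))=\zeta(u''(x))+\zeta(v''(x))$, and this transfers to the integral. Translation and dual-epi-translation invariance are immediate, the $\tau$-upper semicontinuity of the integral term is \cite[Theorem~1]{integral}, and the constant and length summands are $\tau$-continuous.

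For the \emph{only if} direction, I first set $c_0:=Z(u_0)$ with $u_0$ the zero function on $\{0\}$, so $Z=c_0$ on every one-point function by translation invariance. For an affine $\ell$ on $[0,L]$, vertical and dual translation invariance reduce $Z(\ell|_{[0,L]})$ to a function $\phi(L)$ of the length alone. Applying the valuation identity to $u=\ell|_{[0,L_1]}$ and $v=\ell|_{[L_1,L_1+L_2]}$ (whose meet is $\ell|_{[0,L_1+L_2]}$ and whose join is a one-point function at $L_1$) produces the Cauchy equation $\phi(L_1+L_2)=\phi(L_1)+\phi(L_2)-c_0$; $\tau$-upper semicontinuity makes $\phi-c_0$ locally bounded above, hence $\phi(L)=c_0+c_1 L$ for some $c_1\in\mathbb{R}$. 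The same concatenation identity with matching values but slopes $m_1<m_2$ then extends $Z=c_0+c_1 V_1(\dom u)$ by induction to every piecewise affine $u\in\LC\NN$.

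Next I analyze $Z$ on the model quadratics $u_{a,L}(x)=(a/2)x^2$ on $[-L/2,L/2]$. Splitting $u_{a,L}$ at the origin and using translation plus dual translation invariance to identify each half with $u_{a,L/2}$, the valuation identity yields $Z(u_{a,L_1+L_2})=Z(u_{a,L_1})+Z(u_{a,L_2})-c_0$; a second Cauchy argument gives $Z(u_{a,L})=c_0+c_1 L+L\zeta(a)$, where $\zeta(a):=Z(u_{a,1})-c_0-c_1$. The membership $\zeta\in\conc$ is checked in pieces: $\zeta(0)=0$ reduces to the affine case; $\zeta\ge 0$ follows by $\tau$-approximating $u_{a,1}$ from below by its tangent-line envelope restricted to $[-1/2,1/2]$; upper semicontinuity of $\zeta$ comes from $\tau$-continuity of $a\mapsto u_{a,1}$; the growth condition $\zeta(t)/t\to 0$ is extracted by evaluating $Z$ along the degenerating family $u_{a,L}$ with $aL$ held constant, so that the epigraphs collapse to a point function and $\tau$-upper semicontinuity yields the bound; concavity comes from a further application of the valuation identity to a pair of $C^1$ piecewise quadratic functions with curvatures exchanged between their two halves.

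Finally I extend to general $u\in\LC\NN$ via $C^1$ piecewise quadratic interpolants $u^{(N)}$ obtained from partitioning $\dom u=[p,q]$ into $N$ equal pieces with curvatures equal to the second-difference quotients of $u'$. One checks $u^{(N)}\to u$ in $\tau$, and the concatenation identity yields $Z(u^{(N)})=c_0+c_1 V_1(\dom u)+\sum_j |I_j|\zeta(a_j^{(N)})$, a Riemann sum for $\int\zeta(u''(x))\,\dif x$. The main obstacle is upgrading the one-sided inequality $Z(u)\ge\limsup Z(u^{(N)})$ coming from $\tau$-upper semicontinuity to an equality. My plan is to exploit that the right-hand side of \eqref{A} is \emph{itself} $\tau$-upper semicontinuous and to construct a two-sided approximation scheme---piecewise quadratic envelopes above and below $u$ whose $\zeta$-Riemann sums converge monotonically to $\int\zeta(u''(x))\,\dif x$---so that the value of $Z(u)$ is squeezed between the resulting limits. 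A further technical subtlety is the handling of boundary points of $\dom u$ where $u'$ may jump: the singular part of the Monge--Ampère measure must not contribute to the integral, which I would handle by comparison with the piecewise affine bound, leveraging $\zeta(0)=0$ and the growth condition on $\zeta$.
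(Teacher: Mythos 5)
Your ``if'' direction and the first stages of the ``only if'' direction (pinning down $c_0$ on point functions, $c_1$ on affine pieces via Cauchy's equation, and defining $\zeta$ on quadratics, with $\zeta\ge 0$, $\zeta(t)/t\to 0$ obtained from degenerating quadratics with bounded Lipschitz constant) track the paper closely and are sound. The genuine gap is in your final extension step, which is exactly where the paper invests all of its technical machinery. You correctly identify that $\tau$-upper semicontinuity only gives $Z(u)\ge\limsup_N Z(u^{(N)})$ for your piecewise quadratic interpolants, but your plan to close the gap by a ``two-sided squeeze'' with envelopes above and below $u$ cannot work: upper semicontinuity yields the \emph{same} one-sided inequality no matter whether the approximants lie above or below $u$ (there is no monotonicity of $Z$ in the pointwise order, and no lower semicontinuity anywhere), and the upper semicontinuity of the right-hand side of \eqref{A} is likewise useless for bounding $Z(u)$ from above. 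What is actually needed is a reverse inequality $Z(u)\le Z(v)+\rho V_1(\dom u)$ for suitable $v\in P\LQ(\mathbb{R})$, and the paper obtains it by a quite different mechanism: for a.e.\ point of twice differentiability one builds a small support triangle and then tiles $[-m,m]$ with $n$ affinely translated copies of the local piece of $u$, so that the resulting function $\tau$-converges to a quadratic $q_{x_0}+\I_{[-m,m]}$; applying upper semicontinuity to \emph{that} sequence turns the local value $Z(u+\I_{[x_0-\varepsilon,x_0+\varepsilon]})$ into an upper bound by $\zeta(u''(x_0))$ times the interval length (Lemmas~\ref{lemma2} and \ref{lemma2a}), the exceptional set is controlled through the Lipschitz constant (Lemma~\ref{singular}), and a Vitali covering argument assembles these local bounds into Proposition~\ref{prop1}. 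Nothing in your proposal substitutes for this reversal-of-inequality idea, so the concluding identity $Z(u)=c_0+c_1V_1(\dom u)+\int\zeta(u'')\dif x$ is not reached.

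A secondary, smaller gap: your concavity argument for $\zeta$ (``valuation identity applied to a pair of $C^1$ piecewise quadratic functions with curvatures exchanged between their two halves'') can only produce identities between values of $Z$, never the inequality $\zeta(\lambda s+(1-\lambda)r)\ge\lambda\zeta(s)+(1-\lambda)\zeta(r)$. The paper's Lemma~\ref{lemma1a} needs an approximating sequence $v_n$ built from many alternating tangent quadratic arcs of curvatures $r$ and $s$, whose curvature-$s$ portion occupies a fraction $\lambda=(a-r)/(s-r)$ of the interval, followed by an application of $\tau$-upper semicontinuity as $n\to\infty$; some argument of this type (an approximation plus semicontinuity, not just the valuation identity) is indispensable here as well.
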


In particular, Theorem~\ref{maintheorem} also provides a characterization of the more general functional
\begin{align}\label{integral_m}
Z(u) = \int_{\dom u} \zeta(u''(x))\dif x,
\end{align}
for $\zeta \in \conc$ and $u \in \LC\NN$, where we set $u''(x)=0$ whenever $u$ is not twice differentiable at $x$. In Remark \ref{obs}, we explain why the restriction to sequences with uniformly bounded Lipschitz constants is necessary.

Since the integral in \eqref{integral_m} vanishes for piecewise affine functions, we obtain the following consequence.
\begin{cor}
A functional  $Z:\LC\NN\rightarrow\mathbb{R}$ is  a $\tau$-continuous, translation and dually epi-translation invariant valuation  if and only if there are constants $c_0,c_1\in\mathbb{R}$ such that
\begin{align*}
Z(u)= c_0+c_1V_1(\dom u)
\end{align*}
for every $u\in \LC\NN$.
\end{cor}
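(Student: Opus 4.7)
The plan is to reduce everything to Theorem~\ref{maintheorem}. The ``if'' direction is immediate: setting $\zeta\equiv 0$ in \eqref{A} shows that $Z(u)=c_0+c_1V_1(\dom u)$ is a $\tau$-upper semicontinuous, translation and dually epi-translation invariant valuation. To upgrade to $\tau$-continuity, I would observe that for $u_k\seq u$ the compact domains $\dom u_k$ converge to $\dom u$ in the Hausdorff metric, whence $V_1(\dom u_k)\to V_1(\dom u)$ and therefore $Z(u_k)\to Z(u)$.

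For the ``only if'' direction, let $Z$ satisfy the stated hypotheses. Since $\tau$-continuity implies $\tau$-upper semicontinuity, Theorem~\ref{maintheorem} yields constants $c_0,c_1\in\mathbb{R}$ and $\zeta\in\conc$ with
\begin{align*}
Z(u)= c_0+c_1V_1(\dom u)+\int_{\dom u}\zeta(u''(x))\dif x
\end{align*}
for every $u\in\LC\NN$, and the task reduces to showing $\zeta\equiv 0$. Assume for contradiction that $\zeta\not\equiv 0$; since $\zeta$ is concave, nonnegative, and vanishes at $0$, there exists $t_0>0$ with $\zeta(t_0)>0$. I would then work with the test function $u(x)=\tfrac{t_0}{2}x^2+\I_{[0,1]}(x)\in\LC\NN$, for which $u''\equiv t_0$ on $(0,1)$ and the integral term in \eqref{A} equals $\zeta(t_0)>0$, and approximate it by the convex piecewise-linear interpolants $u_k$ of $u$ at the nodes $\{i/k:0\le i\le k\}$, extended by $+\infty$ outside $[0,1]$. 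Each $u_k\in\LC\NN$ has domain $[0,1]$, Lipschitz constant at most $t_0$, and $u_k''=0$ almost everywhere. Because $u_k\to u$ uniformly on $[0,1]$ and the Lipschitz constants are uniformly bounded by $t_0$, we have $u_k\seq u$, so $\tau$-continuity gives $Z(u_k)\to Z(u)$. However, the integral in \eqref{A} vanishes on piecewise affine functions, so $Z(u_k)=c_0+c_1 V_1([0,1])=c_0+c_1$ for every $k$, forcing $Z(u)=c_0+c_1$. This contradicts $\int_{\dom u}\zeta(u''(x))\dif x=\zeta(t_0)>0$, so $\zeta\equiv 0$ and the claimed formula follows.

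The main (and essentially only) subtlety is confirming that the piecewise-linear approximants $u_k$ genuinely $\tau$-converge to $u$, which requires checking all three conditions in the definition of $\tau$-convergence. The uniform Lipschitz bound is built into the construction of the interpolant of a Lipschitz convex function, and epi-convergence is automatic from uniform convergence of $u_k\to u$ on the common compact domain $[0,1]$. Once these verifications are in place, the approximation mechanism above completes the argument, since it converts the hypothesis of two-sided (rather than merely upper) $\tau$-semicontinuity into the vanishing of $\zeta$ on a single positive value, which by concavity and nonnegativity with $\zeta(0)=0$ is enough to rule out any nonzero $\zeta\in\conc$ via the standard slope-monotonicity argument.
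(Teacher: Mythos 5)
Your proposal is correct and follows essentially the same route the paper intends: deduce the representation from Theorem~\ref{maintheorem} (with $\zeta\equiv 0$ giving the ``if'' direction via Lemmas~\ref{volume2} and~\ref{volume1}), and use $\tau$-continuity together with the fact that the integral term vanishes on piecewise affine approximants to force $\zeta\equiv 0$. Your concrete choice of test functions $\tfrac{t_0}{2}x^2+\I_{[0,1]}$ and their piecewise-linear interpolants is just an explicit instance of the paper's appeal to piecewise affine approximation (Lemma~\ref{piecewise}), and the needed facts ($\zeta(0)=0$ and the uniform Lipschitz bound of the interpolants) check out.
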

Note that  $u\mapsto V_1(\dom u)$ is not continuous with respect to epi-convergence (see Remark \ref{obs}).

\section{Tools}\label{preliminaries}

In this section we present some basic results and establish some notation that will be used throughout the paper. We also show that each term that appears in  \eqref{A} satisfies the conditions of being a $\tau$-upper semicontinuous, translation and  dually epi-translation invariant valuation.  In particular, if $u_k$ is $\tau$-convergent to $u$, then the volume of the domain depends continuously on $u$. In this section, we present the results in $\mathbb{R}^n$ to provide a framework that can be used in future work.

For a function $u \in \Conv(\mathbb{R}^n)$, we denote by $\mbox{epi}(u)$ its epigraph, 
$$\mbox{epi}(u) = \{(x,t)\in \mathbb{R}^{n+1} \mid \ u(x)\leq t\},$$
and by $\conv(A)$ the convex hull of a set $A \subset \mathbb{R}^n$, 
which is  the smallest  convex set containing $A$.

Consider the indicator functions of convex bodies $K \in \mathcal{K}^n$, defined by
\begin{align*}
\I_K(x)=\begin{cases}
0,&  \ \mbox{if} \ \ x\in K\\
+\infty,&  \ \mbox{if} \ \ x\not\in K.
\end{cases}
\end{align*}
Note that these indicator functions belong to $\LC\RR$.

The following lemma is a simple result that allows us  to obtain new valuations from a given one.
\begin{lema}\label{lemmasum}
Let $Z:\LC\RR\rightarrow \mathbb{R}$ be a valuation. If $w\in \LC\RR$ and
\begin{align*}
Z_{w}(u):=Z(u+w),
\end{align*}
for $u\in \LC\RR$, then $Z_w$ is a valuation on $\LC\RR$.
\end{lema}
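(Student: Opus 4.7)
The plan is to reduce the valuation identity for $Z_w$ to the valuation identity for $Z$ by checking two things: that addition of a fixed $w$ preserves membership in $\LC\RR$, and that addition distributes over the lattice operations $\wedge,\vee$.

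First I would verify that $u+w \in \LC\RR$ whenever $u, w \in \LC\RR$. The sum is convex and lower semicontinuous as a sum of two such functions, and its effective domain is $\dom(u+w) = \dom u \cap \dom w$, which is a compact convex set. On $\interior(\dom(u+w)) \subset \interior(\dom u)\cap\interior(\dom w)$, the Lipschitz estimates for $u$ and $w$ give
\begin{align*}
|(u+w)(x)-(u+w)(y)| \leq (L_u+L_w)\,\|x-y\|,
\end{align*}
so $u+w$ is Lipschitz on the interior of its domain. Hence $u+w \in \LC\RR$, and so $Z(u+w)$ is well defined.

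Next I would record the pointwise identities
\begin{align*}
(u\wedge v)+w = (u+w)\wedge(v+w), \qquad (u\vee v)+w = (u+w)\vee(v+w),
\end{align*}
which are immediate at every point of $\mathbb{R}^n$ (they hold trivially outside $\dom w$ since both sides are $+\infty$). In particular, if $u, v \in \LC\RR$ are such that $u\wedge v, u\vee v \in \LC\RR$, then applying the previous paragraph with $u\wedge v$ and $u\vee v$ in place of $u$ shows that $(u+w)\wedge(v+w)$ and $(u+w)\vee(v+w)$ also lie in $\LC\RR$. Thus the valuation identity for $Z$ applies to the pair $u+w$, $v+w$.

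Putting the two steps together and using the definition of $Z_w$,
\begin{align*}
Z_w(u\wedge v)+Z_w(u\vee v) &= Z\bigl((u\wedge v)+w\bigr)+Z\bigl((u\vee v)+w\bigr)\\
&= Z\bigl((u+w)\wedge(v+w)\bigr)+Z\bigl((u+w)\vee(v+w)\bigr)\\
&= Z(u+w)+Z(v+w) = Z_w(u)+Z_w(v),
\end{align*}
which is the valuation property for $Z_w$. There is no real obstacle here: the statement is essentially an algebraic observation, the only mildly non-trivial point being that one must confirm $\LC\RR$ is closed under the operation $u \mapsto u+w$, since the definition of valuation requires all four functions appearing in the identity to belong to the underlying space.
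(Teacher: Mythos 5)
Your proof is correct and takes essentially the same route as the paper's: you use the pointwise identities $(u\wedge v)+w=(u+w)\wedge(v+w)$ and $(u\vee v)+w=(u+w)\vee(v+w)$ and then apply the valuation property of $Z$ to the shifted pair. Your extra verification that $\LC\RR$ is closed under $u\mapsto u+w$ is a detail the paper leaves implicit (and, like the paper, you tacitly assume $\dom u\cap\dom w\neq\emptyset$ so that $u+w$ is proper), but the core argument is the same.
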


\begin{proof}
Let $w,u,v$  be convex functions in $\LC\RR$ such that $u\wedge v\in \LC\RR$ as well. It is easy to see that
\begin{align*}
u\wedge v +w &=  (u+w)\wedge (v +w)\\
u\vee v +w &=  (u+w)\vee (v +w).    
\end{align*}
Applying $Z$ to the equations above and using that $Z$ is a valuation, we obtain
\begin{align*}
Z_w( u\wedge v) + Z_w( u\vee v) &=  Z( u\wedge v +w)+ Z( u\vee v +w )\\
&= Z( (u+w)\wedge (v +w))+ Z( (u+w)\vee (v +w) )\\
&= Z( u+w)+ Z( v+w )\\
&=Z_w( u)+ Z_w( v ).
\end{align*}
Therefore, $Z_w$ is a valuation.
\end{proof}

Before stating the next lemma, we recall the definition of Minkowski addition for sets in $\mathbb{R}^n$: for $A, B \subset \mathbb{R}^n$, we define
$$A + B := \{ a + b \mid a \in A, \, b \in B\}.$$

\begin{lema}\label{domain}
Let $u, v \in \LC\RR$, and assume that $u \wedge v \in \LC\RR$. Then the following equalities hold:
\begin{align*}
\dom(u \vee v) &= \dom u \cup \dom v,\\
\dom(u \wedge v) &= \dom u \cap \dom v.
\end{align*}
In particular,  the Minkowski addition satisfies
$$\dom(u\vee v) + \dom(u\wedge v) = (\dom u \cup \dom v) + (\dom u \cap \dom v).$$
\end{lema}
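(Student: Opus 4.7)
The plan is to dispatch the three claimed identities by a direct definitional argument, with no deeper machinery required.

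I would first handle the two domain identities in parallel, each by checking mutual inclusion. The key observation is that for any extended real-valued function $w$, a point $x$ lies in $\dom w$ iff $w(x) < +\infty$; combining this with the pointwise definitions of $\vee$ and $\wedge$ recorded in the introduction, each membership question reduces to a simple logical condition on the finiteness of $u(x)$ and $v(x)$. A brief case analysis over the possibilities (both finite, only one finite, neither finite) then pins down the two claimed set equalities. The hypothesis $u \wedge v \in \LC\RR$ is used only in a bookkeeping capacity: it guarantees that the pointwise minimum is a proper convex function with a well-defined compact effective domain, so that the statement about $\dom(u \wedge v)$ is meaningful in the first place.

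For the ``in particular'' clause, once the two domain identities are in hand, the Minkowski sum identity is immediate: substituting the first two equalities into the left-hand side $\dom(u \vee v) + \dom(u \wedge v)$ literally reproduces the claimed right-hand side, and no further manipulation is needed.

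The main obstacle: there is no genuine mathematical difficulty here, as the lemma is essentially a definitional unfolding that repackages the pointwise operations $\vee$ and $\wedge$ as set-theoretic operations on effective domains, in a form suitable for invocation in later valuation arguments. The only care needed is to fix conventions for extended real-valued functions (in particular that $+\infty$ is attained precisely off the effective domain) consistently with the pointwise operations. Most of the write-up will be spent making these conventions explicit so that the lemma can be freely used later in the proof of Theorem~\ref{maintheorem} without revisiting the definitions.
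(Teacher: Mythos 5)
Your method coincides with the paper's: both proofs are pure definitional unwinding of $\dom$, $\vee$ and $\wedge$, and the ``in particular'' clause is obtained by direct substitution exactly as you describe. One substantive caution, however: be careful which identity your case analysis actually produces. Since $u\vee v$ is the pointwise \emph{maximum}, $(u\vee v)(x)<+\infty$ forces both $u(x)<+\infty$ and $v(x)<+\infty$, so $\dom(u\vee v)=\dom u\cap\dom v$; dually, since $u\wedge v$ is the pointwise minimum, $\dom(u\wedge v)=\dom u\cup\dom v$. These are the identities the paper's own proof derives and the ones used later (for instance in the proof of Lemma~\ref{volume2}), and they are the \emph{swapped} versions of the equalities displayed in the statement of the lemma, which appear to be a misprint. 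Your proposal promises that the case analysis ``pins down the two claimed set equalities''; carried out faithfully, it instead yields the corrected pairing, so your write-up should record that pairing rather than endorse the one printed in the statement. The Minkowski-sum clause is unaffected, being symmetric in the two sets, and your remark that the hypothesis $u\wedge v\in\LC\RR$ plays only a bookkeeping role is consistent with the paper, where it is likewise not needed for the set identities themselves.
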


\begin{proof}
Let $u,v\in \LC\RR$ be such that $ u\wedge v\in \LC\RR$. Since
\begin{align*}
\dom{u\vee v}&=\{x\in\mathbb{R}^n\mid \ u(x)<+\infty \ \mbox{and} \ v(x)<+\infty\} \\
&= \{x\in\mathbb{R}^n: u(x)<+\infty\} \cap \{x\in\mathbb{R}^n\mid \ v(x)<+\infty\}\\
&= \dom u\cap \dom v
\end{align*}
and
\begin{align*}
\dom{u\wedge v}&=\{x\in\mathbb{R}^n\mid \ u(x)<+\infty \ \mbox{or} \ v(x)<+\infty\} \\
&= \{x\in\mathbb{R}^n\mid \ u(x)<+\infty\} \cup \{x\in\mathbb{R}^n: v(x)<+\infty\}\\
&= \dom u\cup \dom v,
\end{align*}
this yields the result.
\end{proof}

\begin{lema}\label{volume2}
The functional $Z:\LC\RR\rightarrow \mathbb{R}$ defined by
\begin{align*}
Z(u)= V_n(\dom u)
\end{align*}
is an $\V\N$, translation and dually epi-translation invariant valuation.
\end{lema}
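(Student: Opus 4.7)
The plan is to verify each of the four requirements—valuation, translation invariance, $\V\N$-invariance, and dual epi-translation invariance—separately, since each reduces immediately to a well-known property of Lebesgue measure once the domain of the involved functions has been identified.

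First I would handle the valuation property. Given $u, v \in \LC\RR$ with $u \wedge v \in \LC\RR$, Lemma \ref{domain} yields $\dom(u \vee v) = \dom u \cap \dom v$ and $\dom(u \wedge v) = \dom u \cup \dom v$. Both sets are Lebesgue measurable (indeed, they are convex bodies), so the standard inclusion–exclusion identity
$$V_n(\dom u \cup \dom v) + V_n(\dom u \cap \dom v) = V_n(\dom u) + V_n(\dom v)$$
translates directly into $Z(u \wedge v) + Z(u \vee v) = Z(u) + Z(v)$.

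Next I would address the invariances. If $\tau$ is the translation $x \mapsto x + t_0$, then $\dom(u \circ \tau) = \dom u - t_0$, whose volume equals $V_n(\dom u)$ by translation invariance of Lebesgue measure. For $\phi \in \V\N$, we have $\dom(u \circ \phi) = \phi^{-1}(\dom u)$, and $|\det \phi^{-1}| = 1$ gives $V_n(\phi^{-1}(\dom u)) = V_n(\dom u)$ by the change of variables formula. Finally, since adding a linear function $l$ or a constant $c$ does not alter the domain of $u$, one has $\dom(u + l + c) = \dom u$, proving dual epi-translation invariance.

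There is essentially no obstacle here; the only bookkeeping I anticipate is checking that $u \circ \tau$, $u \circ \phi$, $u + l$, and $u + c$ all remain in $\LC\RR$. For translations and affine additions this is immediate, since the Lipschitz constant and compactness of the domain are preserved. For $\phi \in \V\N$ one observes that $\phi^{-1}(\dom u)$ is compact by continuity of $\phi^{-1}$, and the bound $|u(\phi x) - u(\phi y)| \leq L_u \|\phi\| \, \|x - y\|$ shows that $u \circ \phi$ is Lipschitz with constant $L_u \|\phi\|$, hence lies in $\LC\RR$.
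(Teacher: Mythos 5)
Your proposal is correct and follows essentially the same route as the paper: identify the domains via Lemma \ref{domain}, apply the inclusion--exclusion (valuation) property of $V_n$, and reduce each invariance to translation invariance of Lebesgue measure, the change of variables formula for $\V\N$, and the fact that adding an affine function leaves the domain unchanged. Your extra check that $u\circ\tau$, $u\circ\phi$, and $u+l+c$ remain in $\LC\RR$ is a harmless addition the paper leaves implicit.
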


\begin{proof}
Let $u,v\in\LC\RR$ be such that $ u\wedge v\in\LC\RR$ as well. Using that  $V_n$ is a valuation and  Lemma \ref{domain}, we obtain
\begin{align*}
Z(u\vee v) & = V_n(\dom u\vee v)\\
& = V_n(\dom u\cap \dom v)\\
& = V_n(\dom u) + V_n(\dom v)-V_n(\dom u\cup \dom v)\\
& = V_n(\dom u) + V_n(\dom v)-V_n(\dom u\wedge v)\\
&= Z(u)+ Z( v) - Z(u\wedge v),
\end{align*}
i.e., $Z$ is a valuation. Now let $c \in \mathbb{R}$, let $l:\mathbb{R}^n \to \mathbb{R}$ be a linear function and  $\tau_y(x)=x+y$ for $y\in\mathbb{R}^n$. Since $\dom(u+l+c)=\dom u$, $\dom(u\circ \tau_y^{-1})= \dom u+y$ and $V_n$ is a translation invariant valuation, it follows that
$$Z(u+l+c)= V_n(\dom(u+l+c))= V_n(\dom u)= Z(u)$$
and
$$Z(u\circ \tau_y^{-1})= V_n(\dom(u\circ \tau_y^{-1}))= V_n(\dom u+y)= V_n(\dom u)= Z(u).$$
Thus, $Z$ is a translation and dually epi-translation invariant valuation. To show that $Z$ is $\V\N$ invariant, we  only need to observe that for $\varphi\in \V\N$, it holds that $\dom u\circ\varphi=\varphi^{-1}(\dom u)$, and that $V_n$ is $\V\N$ invariant.
\end{proof}

To say that $u_k\in \Conv(\mathbb{R})$ epi-converges to $u\in \Conv(\mathbb{R})$ is equivalent to saying that $\epi(u_k)$ converges to $\epi(u)$  in the Painlevé-Kuratowski sense as $k\rightarrow +\infty$  (see [\citealp{76},  Definition 7.1, Proposition 7.2]). By \cite[Theorem 4.26]{76}, if $ F:\mathbb{R}^{n+1}\to \mathbb{R}^n$ is a continuous mapping and  $u_k$ epi-converges to $u$, i.e.,  
$$ \limsup_{k\rightarrow +\infty}\epi(u_k)= \liminf_{k\rightarrow +\infty} \epi(u_k)=\epi(u),$$ 
then we obtain the inclusion 
\begin{align*}
    F(\liminf_{k\to +\infty} \epi(u_k))\subseteq \liminf_{k\to +\infty} F(\epi(u_k)).
\end{align*}
Taking $F(x, t)= x$, which maps epigraphs to  domains, we obtain
\begin{align*}
\dom u= F(\epi (u)) =F(\liminf_{k\to +\infty} \epi(u_k))\subseteq \liminf_{k\to +\infty} F(\epi(u_k))= \liminf_{k\to +\infty} \dom u_k.     
\end{align*}
By definition, 
\begin{align*}
 \liminf_{k\to +\infty}   \dom u_k \subseteq  \limsup_{k\to +\infty}   \dom u_k.
\end{align*}
Hence,
\begin{align}\label{inc}
\dom u \subseteq \liminf_{k\rightarrow +\infty}\dom u_k \subseteq\limsup_{k\rightarrow +\infty}\dom u_k,    
\end{align} 
(see also \cite[Propostion 7.4]{76}). The definitions of $\liminf_{k\rightarrow +\infty}C_k$ and $\limsup_{k\rightarrow +\infty}C_k$ for a sequence of sets $C_k\subset\mathbb{R}^n$ can be found in [\citealp{76}, Chapter 4]. Furthermore, by [\citealp{76}, page 117], if $C_k$ converges to $C$ in the Painlevé-Kuratowski sense  and there exists a bounded set $B\subset\mathbb{R}^n$ such that $C_k,C\subset B$, then this convergence is equivalent to convergence in the Hausdorff metric.  In order to prove Lemma \ref{volume1}, we will need the following results from \cite{76}.

\begin{teo}[\cite{76}, Theorem 4.5 (b)]\label{sup}
For sets $C_k,C\subset \mathbb{R}^n$ with $C$ closed, we have 
\begin{align*}
    \limsup_{k\to \infty} C_k\subset C
\end{align*}
if and only if for every compact set $B\subset \mathbb{R}^n$ with $C\cap B=\emptyset$, there exists a set  $N\subset \mathbb{N}$ such that $\mathbb{N}\setminus N$ is finite  and $C_k\cap B=\emptyset$ for all $k\in N$. 
\end{teo}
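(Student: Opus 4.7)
The plan is to prove both directions by unwinding the definition of the outer limit $\limsup_{k\to\infty} C_k$, which consists of all cluster points of sequences $x_{k_j}\in C_{k_j}$ along subsequences $k_j\to\infty$. The closedness of $C$ together with compactness of the testing sets $B$ will do all the work; no sophisticated machinery is required.

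For the implication ``only if'' I would argue by contrapositive. Suppose $B\subset\mathbb{R}^n$ is compact with $B\cap C=\emptyset$ but no such cofinite $N$ exists, so $C_k\cap B\neq\emptyset$ for infinitely many indices $k_j$. Choose $x_{k_j}\in C_{k_j}\cap B$. Since $B$ is compact, a subsequence $x_{k_{j_i}}$ converges to some $x\in B$. By the very definition of the outer limit, $x\in \limsup_{i\to\infty}C_{k_{j_i}}\subseteq \limsup_{k\to\infty}C_k\subseteq C$. This contradicts $B\cap C=\emptyset$, establishing the required cofinite escape property.

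For the converse direction, pick any $x\notin C$. Because $C$ is closed, there exists $\varepsilon>0$ such that the closed ball $B:=\overline{B_\varepsilon(x)}$ is disjoint from $C$. Applying the hypothesis to this compact $B$ produces $N\subset\mathbb{N}$ with $\mathbb{N}\setminus N$ finite such that $C_k\cap B=\emptyset$ for all $k\in N$. Consequently, any sequence $x_{k_j}\in C_{k_j}$ with $k_j\to\infty$ must eventually avoid $B$, so no such sequence can converge to $x$. Thus $x\notin\limsup_{k\to\infty}C_k$, and we conclude $\limsup_{k\to\infty}C_k\subseteq C$.

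The argument has essentially no obstacle; the only subtle point is ensuring in the ``only if'' direction that once we extract a convergent subsequence along the original subsequence of ``bad'' indices, we remain inside a subsequence of the full sequence $(C_k)$, so that the resulting cluster point genuinely lies in $\limsup_{k\to\infty}C_k$. This is automatic, and the rest is a direct translation of the sequential definition of the Painlevé--Kuratowski outer limit into the language of compact test sets.
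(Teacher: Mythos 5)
Your proof is correct. Note that the paper does not prove this statement at all; it is quoted verbatim as Theorem 4.5(b) of Rockafellar--Wets \cite{76} and used as a black box, so there is no internal argument to compare against. Your two-sided argument is the standard one: the ``only if'' direction extracts, from infinitely many indices with $C_k\cap B\neq\emptyset$, a convergent subsequence inside the compact set $B$, whose limit lies in $\limsup_{k\to\infty}C_k\subseteq C$ yet also in $B$, a contradiction; the ``if'' direction uses closedness of $C$ to place a small closed ball around any $x\notin C$ disjoint from $C$, and the cofinite-avoidance hypothesis then shows $x$ cannot be a cluster point of any selection $x_{k_j}\in C_{k_j}$, so $x\notin\limsup_{k\to\infty}C_k$. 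Both steps correctly use the sequential (Painlev\'e--Kuratowski) definition of the outer limit, and closedness of $C$ is invoked exactly where it is needed, so there is no gap.
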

\medskip

\begin{teo}[\cite{76}, Corollary 4.12]\label{cor}
Let $C_k\subset \mathbb{R}^n$ be compact and connected sets,  with   $\limsup_{k\to \infty} C_k$ bounded. Then there exists a bounded set $B\subset \mathbb{R}^n$ such that $C_k\subset B$ for all $k$.
\end{teo}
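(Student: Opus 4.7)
The plan is to argue by contradiction, combining Theorem~\ref{sup} with the topological observation that a connected subset of $\mathbb{R}^n$ avoiding the sphere $S_M=\{x\in\mathbb{R}^n:\|x\|=M\}$ must lie entirely in one of the two components of $\mathbb{R}^n\setminus S_M$, namely the open ball $B(0,M)$ or its open exterior. I would assume that no bounded $B\subset\mathbb{R}^n$ contains every $C_k$; then after passing to a subsequence I obtain points $y_j\in C_{k_j}$ with $\|y_j\|\to+\infty$.

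Next, I fix $R>0$ with $\limsup_{k\to\infty}C_k\subset B(0,R)$. For each integer $M>R$, the sphere $S_M$ is compact and disjoint from $\limsup_k C_k$, so Theorem~\ref{sup} yields a cofinite set $N_M\subset\mathbb{N}$ with $C_k\cap S_M=\emptyset$ for every $k\in N_M$. The topological remark then confines each such $C_k$ entirely to $B(0,M)$ or entirely to $\{x:\|x\|>M\}$. For $M$ large and $j$ large with $k_j\in N_M$ and $\|y_j\|>M$, this forces $C_{k_j}$ into the exterior component, so $C_{k_j}\cap B(0,R)=\emptyset$.

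The hard part will be ruling out this ``escape to infinity'' behaviour, which is where the boundedness of $\limsup_{k}C_k$ must be used together with the existence of a cluster point of elements drawn from the $C_k$. Picking $x_0\in\limsup_{k}C_k\subset B(0,R)$, there is a sequence $x_{k_l}\in C_{k_l}$ with $x_{k_l}\to x_0$; a diagonal argument should arrange a common subsequence of indices along which the same $C_{k_l}$ contains both a point $x_{k_l}$ near $x_0$ and an escaping point $y_{k_l}$ with $\|y_{k_l}\|>M$. For $M>R+1$ with $k_l\in N_M$, this produces two points of $C_{k_l}$ in different components of $\mathbb{R}^n\setminus S_M$, contradicting connectedness. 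Combining the resulting uniform bound on the cofinite set $N_M$ with the trivial individual compactness bounds for the finitely many exceptional $C_k$ yields the desired bounded set $B$.
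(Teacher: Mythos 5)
Your use of Theorem~\ref{sup} and the sphere-separation argument is fine as far as it goes, but the step you yourself flag as ``the hard part'' contains a genuine gap that cannot be repaired for the statement as written (note the paper quotes Theorem~\ref{cor} from \cite{76} without proof). First, boundedness of $\limsup_k C_k$ does not give you a point $x_0\in\limsup_k C_k$: the outer limit may be empty. Second, and more seriously, even when such an $x_0$ exists, membership in the outer limit only provides \emph{some} subsequence $C_{k_l}$ containing points $x_{k_l}\to x_0$, and no diagonal argument can force this subsequence to share infinitely many indices with the escaping subsequence $C_{k_j}$; the two index sets may be disjoint, so you never produce a single connected $C_k$ containing both a point near $x_0$ and a point beyond $S_M$. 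Concretely, take $C_k=\{0\}$ for $k$ even and $C_k=\{k\}$ for $k$ odd: each $C_k$ is compact and connected, $\limsup_k C_k=\{0\}$ is bounded, yet no bounded set contains all $C_k$. So the statement, as transcribed here, is missing a hypothesis: in \cite{76} one additionally knows that the sets cannot drift away, e.g.\ that $\liminf_k C_k\neq\emptyset$ (equivalently, all but finitely many $C_k$ meet a fixed bounded set). This is exactly what is available where the result is applied in Lemma~\ref{volume1}, since $\emptyset\neq\dom u\subseteq\liminf_{k}\dom u_k$ by \eqref{inc}.

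With that extra hypothesis your argument closes, and in fact simplifies so that Theorem~\ref{sup} is not needed. Pick $x_0\in\liminf_k C_k$, so that for all large $k$ there are $x_k\in C_k$ with $x_k\to x_0$. If some subsequence $C_{k_j}$ contained points $y_j$ with $\|y_j\|\to\infty$, fix $M>\max(R,\|x_0\|+1)$, where $\limsup_k C_k\subset B(0,M)$ would follow from $\limsup_k C_k\subset B(0,R)$; for large $j$ the connected set $C_{k_j}$ meets both components of $\mathbb{R}^n\setminus S_M$, hence meets $S_M$ in some point $z_j$. A cluster point of $(z_j)$ lies in $\limsup_k C_k\cap S_M$, contradicting $\limsup_k C_k\subset B(0,R)$. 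Hence all but finitely many $C_k$ lie in $B(0,M)$, and the finitely many exceptional $C_k$ are bounded by compactness, which yields the desired bounded set $B$.
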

\medskip

\begin{lema}\label{volume1}
The functional $Z:\LC\RR\rightarrow \mathbb{R}$ defined by
\begin{align*}
Z(u)= V_n(\dom u)
\end{align*}
is $\tau$-continuous.
\end{lema}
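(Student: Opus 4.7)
The plan is to upgrade the $\tau$-convergence $u_k \seq u$ to Hausdorff convergence of the domains $\dom u_k \to \dom u$, after which the classical continuity of $V_n$ on $\mathcal{K}^n$ with respect to the Hausdorff metric yields the claim. Since \eqref{inc} already supplies $\dom u \subseteq \liminf_{k\to\infty} \dom u_k$, the crux is the reverse inclusion $\limsup_{k\to\infty} \dom u_k \subseteq \dom u$.

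To prove it, fix $x\in\limsup_k \dom u_k$ and, after passing to a subsequence, choose $x_k\to x$ with $x_k\in\dom u_k$. Pick a reference point $x_0\in\interior(\dom u)$. Applying $\dom u\subseteq\liminf_k\dom u_k$ to finitely many points of $\dom u$ whose convex hull contains $x_0$ in its interior (a standard argument for convex sets), we see that $x_0\in\interior(\dom u_k)$ for all sufficiently large $k$; by the consequence of [\citealp{76}, Theorem 7.17] recalled in the introduction, $u_k(x_0)\to u(x_0)$. For each $s\in[0,1)$, since $x_0\in\interior(\dom u_k)$ and $x_k\in\dom u_k$, the segment point $z_k(s)=(1-s)x_0+sx_k$ lies in $\interior(\dom u_k)$, so the uniform Lipschitz bound from condition (iii) gives
\begin{align*}
|u_k(z_k(s))-u_k(x_0)|\le Ms\|x_k-x_0\|.
\end{align*}
Letting $s\to 1^-$ and using the lower semicontinuity of $u_k$ at $x_k$, we deduce $u_k(x_k)\le u_k(x_0)+M\|x_k-x_0\|$, and this bound is uniform in $k$. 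Condition (i) then yields $u(x)\le\liminf_k u_k(x_k)<+\infty$, so $x\in\dom u$.

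With the Painlevé-Kuratowski convergence $\dom u_k\to\dom u$ in hand, Theorem \ref{cor} applied to the compact connected sets $\dom u_k$ (whose outer limit equals the bounded set $\dom u$) guarantees that they lie in a common bounded set; the equivalence between Painlevé-Kuratowski and Hausdorff convergence inside a bounded set, quoted from \cite[p.\,117]{76}, together with the continuity of $V_n$ on $\mathcal{K}^n$ under the Hausdorff metric, then closes the argument.

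The main obstacle is producing the uniform upper bound on $u_k(x_k)$ from which (i) extracts $u(x)<+\infty$; this is precisely the step where the uniform Lipschitz assumption (iii) is used in an essential way, and is the reason it cannot be dropped (cf.\ Remark \ref{obs}). A secondary subtlety is the degenerate case $\interior(\dom u)=\emptyset$: in the one-dimensional setting of the paper this forces $\dom u=\{a\}$ for some $a$, and the same segment argument, carried out between the endpoints of the intervals $\dom u_k$ (using that convex lower semicontinuous functions on a compact interval are continuous at the endpoints, so the Lipschitz estimate from $\interior(\dom u_k)$ extends to $\dom u_k$), still forces these intervals to shrink to $\{a\}$.
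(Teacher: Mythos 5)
Your treatment of the main case (when $\interior(\dom u)\neq\emptyset$) is correct and follows essentially the same skeleton as the paper: reduce everything to the outer inclusion $\limsup_{k}\dom u_k\subseteq\dom u$, derive it from the uniform Lipschitz bound (iii) combined with conditions (i)--(ii) of epi-convergence, then use Theorem \ref{cor} and the equivalence of Painlev\'e--Kuratowski and Hausdorff convergence inside a bounded set, and finish with the Hausdorff-continuity of $V_n$. The differences are organizational: you extract a cluster point directly instead of arguing by contradiction via Theorem \ref{sup}, and you anchor the Lipschitz estimate at a fixed $x_0\in\interior(\dom u)$ (showing $x_0\in\interior(\dom u_k)$ eventually and using $u_k(x_0)\to u(x_0)$), whereas the paper anchors at an arbitrary $y_0\in\dom u$ through condition (ii). Your segment-plus-lower-semicontinuity device is in fact slightly more careful than the paper about the point that \eqref{3..} is only assumed on $\interior(\dom u_k)$; the price you pay is that your anchor needs $\dom u$ to be full-dimensional, which the paper's anchor does not.

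That price is where the genuine gap lies. The lemma is stated on $\LC\RR$ for general $n$ (the section is explicitly set in $\mathbb{R}^n$), but your degenerate case $\interior(\dom u)=\emptyset$ is treated only for $n=1$, where $\dom u$ is a single point; for $n\ge 2$ the domain may be a lower-dimensional convex body and your reduction does not apply. Moreover, in this regime the inclusion $\limsup_k\dom u_k\subseteq\dom u$, which is the backbone of your strategy, can actually fail: in $\mathbb{R}^2$ take $u=\I_{\{0\}}$ and $u_k(x,y)=k|x|+\I_{[-1,1]\times\{0\}}(x,y)$; the domains have empty interior, so condition (iii) gives no control (as the paper remarks, such functions lie in $\LC\RR$ without being Lipschitz), one checks $u_k\seq u$, yet $\limsup_k\dom u_k=[-1,1]\times\{0\}\not\subseteq\dom u$, while $V_2(\dom u_k)=0\to V_2(\dom u)$. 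So the degenerate case requires a volume-specific argument rather than domain convergence, which is exactly the role of the paper's separate first case $V_n(\limsup_k\dom u_k)=0$. Even your one-dimensional sketch is imprecise on where the uniform bound comes from: the values of $u_k$ at the endpoints of $\dom u_k$ are not controlled; the correct anchor is a sequence $y_k\to a$ with $u_k(y_k)\to u(a)$ supplied by condition (ii) (the paper's $y_0$-argument), after which your segment and radial-limit reasoning does go through for the $u_k$ with nondegenerate domains.
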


\begin{proof}
To prove that $Z$ is a $\tau$-continuous valuation for $n\geq1$, let $u_k\in \LC\RR$  be such that  $u_k\seq u$. First, consider the case
$V_n(\limsup_{k\rightarrow +\infty} \dom u_k) = 0$. 
In this case, $V_n(\dom u_k) \to 0$ as $k \to +\infty$, and by \eqref{inc} we have
$$V_n(\dom u) \le V_n\Big(\limsup_{k\to +\infty} \dom u_k\Big) = 0 = \lim_{k\to +\infty} V_n(\dom u_k),$$
so the continuity of $Z$ follows. Otherwise, assume that $\limsup_{k\rightarrow  +\infty}\dom u_k$ has non-empty interior and that the Lipschitz constants of $u_k$ are uniformly bounded by some constant $L>0$. We first analyze the convergence of the domains in the Painlevé sense.  By \eqref{inc},  it suffices to show that
\begin{align*}
 \limsup_{k\rightarrow +\infty}\interior(\dom u_k)= \limsup_{k\rightarrow +\infty}\dom u_k\subseteq \dom u.   
\end{align*}
The first equality is established in \cite[Proposition 4.4]{76}. We proceed by contradiction and suppose that
\begin{align}\label{not}
\limsup_{k\rightarrow +\infty}\interior(\dom u_k)\not\subset \dom u.    
\end{align} 
Then, by Theorem \ref{sup}, there exists a compact set $B\subset\mathbb{R}^n$ such that $\dom u\cap B=\emptyset$, and for every set of indices $N\subseteq\mathbb{N}$, where $\mathbb{N}\setminus N$ is finite,  there exists some $k\in N$ such that $\interior(\dom u_k)\cap B\not=\emptyset$. Now, define the following sets of indices
\begin{align*}
    N_0&= \{k\geq 1\},
\end{align*}
and for $j\geq1$,
\begin{align*}
N_j&=\{k> m_j\}, \quad \mbox{where} \qquad m_j=\min\{k\in N_{j-1}:\interior(\dom u_k)\cap B\not=\emptyset\}.
\end{align*}
We must have $m_j \neq \emptyset$ for every $j\in\mathbb{N}$, since $\mathbb{N}\setminus N_{j-1}$ is finite, \eqref{not} holds, and Theorem \ref{sup} applies. Moreover,  $m_{j+1}>m_j$ for every $j\in\mathbb{N}$.
\medskip

Now, consider the subsequence  $\{u_{m_j}\}_j\subseteq \{u_k\}_k$. Since $u_k$ epi-converges to $u$, we also have that $u_{m_j}$ epi-converges to $u$ as $j\rightarrow +\infty$. Let $x_{m_j}\in\interior(\dom u_{m_j})\cap B$ for every $j\in\mathbb{N}$. Since $\{x_{m_j}\}_j\subset B$ and  $B$ is compact, the sequence $\{x_{m_j}\}_j$ admits a convergent subsequence, say $\{x_{m_{j_k}}\}_k$, with $x_{m_{j_k}}\rightarrow x$ as $k\rightarrow +\infty$, for some $x\in\mathbb{R}^n$. Since $B$ is closed, we also have $x\in B$. Now, by the definition of epi-convergence and using that $\dom u\cap B=\emptyset$, we get
\begin{align}\label{cd}
   + \infty= u(x)\leq \liminf_{k\rightarrow +\infty} u_{m_{j_k}}(x_{m_{j_k}}).
\end{align}

Next, let $y_0\in \dom u$. By the definition of epi-convergence, there exists a sequence $y_{m_{j_k}}$ converging to $y_0$ such that
$$\lim_{k\rightarrow +\infty}u_{m_{j_k}}(y_{m_{j_k}})=u(y_0)<+\infty.$$
Then, for $\varepsilon=1$, there exists $N>0$ such that for $k>N$,
\begin{align}\label{3.}
    |u_{m_{j_k}}(y_{m_{j_k}})|\leq |u(y_0)|+1.
\end{align}
Let $C$ be a compact set containing the sequences $\{x_{m_{j_k}}\}_k\subset B$ and $\{y_{m_{j_k}}\}_k\subset \dom u$. Such a set exists because both $B$ and $\dom u$ are compact. Since each $u_{m_{j_k}}$ is a Lipschitz function with Lipschitz constant bounded above by $L$, and using \eqref{3..} and \eqref{3.}, we  obtain
\begin{align*}
    |u_{m_{j_k}}(x_{m_{j_k}})|& \leq L \, \|x_{m_{j_k}}-y_{m_{j_k}}\|+|u_{m_{j_k}}(y_{m_{j_k}})|\\
    & \leq L\,\mbox{diam}(C) + |u(y_0)|+1\\
    &< M,
\end{align*}
for each $k>N$, where diam$(C)$ is the diameter of $C$ and $M>0$ is a constant that does not depend on $k$. However, this contradicts \eqref{cd}. Therefore, no such compact set $B$ can exist, and we conclude that
\begin{align*}
\limsup_{k\rightarrow +\infty}\dom u_k\subset \dom u\subseteq\liminf_{k\rightarrow +\infty}\dom u_k    \subseteq\limsup_{k\rightarrow +\infty}\dom u_k.
\end{align*}
This implies that $\dom u_k$ converges to $\dom u$ in the Painlevé-Kuratowski sense.

To conclude, by Theorem \ref{cor},  there exists a bounded set containing both $\dom u$  and $\dom u_k$ for all $k\in\mathbb{N}$.
Since the domains of $u_k$ converge to $\dom u$ in the Painlevé sense and all sets are contained in the same bounded set, it follows that $\dom u_k$ converges to $\dom u$ in the Hausdorff metric (see \cite[page 117]{76}).  
Moreover, the $n$-dimensional volume $V_n$ is continuous with respect to the Hausdorff metric, so we have
$$V_n(\dom u_k) \to V_n(\dom u) \quad \text{as } k \to +\infty.$$
Therefore, the desired continuity of $Z$ is established, and the proof is complete.
\end{proof}

We now extend the functional \eqref{integral_m} to the general $n$-dimensional setting
\[
Z(u) = \int_{\dom u} \zeta(\det \D u(x))  \dif x, 
\]
for $\zeta \in \conc$ and $u\in \LC(\mathbb{R}^n)$, which coincides with \eqref{integral_m} when $n=1$. 
\medskip

The following result can be found in [\citealp{integral}, Theorem 1].

\begin{teo}\label{mainteo}
Let $\zeta\in\conc$. Then
\begin{align*}
    Z(u)=\int_{\dom u}\zeta ( \det\D u(x))\dif x
\end{align*}
is finite for every $u\in \LC\RR$ and depends $\tau$-upper semicontinuously on $u$. 
\end{teo}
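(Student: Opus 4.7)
The plan is to prove finiteness and $\tau$-upper semicontinuity separately.

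For finiteness, I would combine the Lipschitz bound on $u$ with Jensen's inequality. Since $u\in\LC\RR$ is Lipschitz on $\interior(\dom u)$ with constant $L_u$, its subgradient image lies in the closed ball $\overline{B(0,L_u)}\subset\mathbb{R}^n$. Aleksandrov's theorem together with the area formula for the gradient map then give the total Monge--Amp\`ere bound
$$\int_{\dom u}\det\D u(x)\,\dif x\leq\omega_n L_u^n,$$
where $\omega_n$ denotes the volume of the unit ball. Because $\zeta\in\conc$ is concave, non-negative, and vanishes at $0$, Jensen's inequality applied to the normalized measure $\dif x/V_n(\dom u)$ yields
$$\int_{\dom u}\zeta(\det\D u(x))\,\dif x\leq V_n(\dom u)\,\zeta\!\left(\frac{\omega_n L_u^n}{V_n(\dom u)}\right)<+\infty.$$

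For $\tau$-upper semicontinuity, let $u_k\seq u$. Lemma \ref{volume1} gives $V_n(\dom u_k)\to V_n(\dom u)$ and Hausdorff convergence of the domains; the uniform Lipschitz bound $L_{u_k}\leq M$ combined with epi-convergence yields uniform convergence of $u_k$ to $u$ on compact subsets of $\interior(\dom u)$. To handle the nonlinear functional, I would exploit the representation $\zeta(t)=\inf_{(a,b)\in A}(a+bt)$, where $A$ is the set of non-negative affine functions majorizing $\zeta$ on $[0,\infty)$. For each such $(a,b)\in A$,
$$\int_{\dom u_k}\zeta(\det\D u_k)\,\dif x\leq a\,V_n(\dom u_k)+b\!\int_{\dom u_k}\det\D u_k\,\dif x\leq a\,V_n(\dom u_k)+b\,\omega_n M^n.$$
Taking $\limsup$, the first term converges to $a\,V_n(\dom u)$ by Lemma \ref{volume1}. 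A Fatou-type analysis of the weak$^*$ limit of the Monge--Amp\`ere measures $\det\D u_k\cdot\mathcal{H}^n|_{\dom u_k}$, combined with optimization over the affine upper bounds and the growth condition $\zeta(t)/t\to 0$, should yield $\limsup_k Z(u_k)\leq Z(u)$.

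The main obstacle is precisely the possible concentration of Monge--Amp\`ere mass in the limit: the sequence $\{\det\D u_k\cdot\mathcal{H}^n|_{\dom u_k}\}$ can develop singular parts typically concentrating on $\partial(\dom u)$ under epi-convergence. Under a purely linear functional this singular mass would persist and destroy the desired inequality, but the sublinear growth condition $\zeta(t)/t\to 0$ ensures that such concentrated mass contributes zero to $\int\zeta(\det\D u)\,\dif x$. Making this rigorous requires an approximation of $\zeta$ from above by affine functions $a_j+b_j t$ with $b_j\searrow 0$, together with a careful decomposition of the limit measure into its absolutely continuous and singular parts, while using the uniform Lipschitz control to prevent mass loss to infinity.
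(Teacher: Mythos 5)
First, a point of comparison: the paper does not prove Theorem \ref{mainteo} at all; it is imported verbatim from \cite{integral} (Theorem 1 there), so there is no in-paper argument to measure your proposal against, and it must be judged on its own.

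Your finiteness argument is essentially correct: the gradient image of a Lipschitz convex function lies in a ball of radius $L_u$, so the total Monge--Amp\`ere mass is at most $V_n(B^n)L_u^n$, and Jensen's inequality for the concave $\zeta$ gives the stated bound (note you implicitly use that $\zeta$ is non-decreasing, which does follow from concavity together with non-negativity on $[0,\infty)$, and that $\zeta(0)=0$). The upper semicontinuity half, however, has a genuine gap. The only estimate you actually display uses a single \emph{global} affine majorant $a+bt\geq\zeta(t)$, and optimizing it over $(a,b)$ can never produce $Z(u)$: it yields at best $\inf_{(a,b)}\{a\,V_n(\dom u)+b\,V_n(B^n)M^n\}$, which is in general strictly larger than $Z(u)$ (for instance $Z(u)=0$ when $u$ is piecewise affine, while this infimum is typically positive, e.g.\ for $\zeta(t)=\sqrt{t}$). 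So the chain of inequalities you write cannot give $\limsup_k Z(u_k)\leq Z(u)$; the entire content of the theorem sits in the step you describe only as ``a Fatou-type analysis \dots should yield.'' What is actually needed is a localization: work at Lebesgue points of $\det\D u$, cover $\interior(\dom u)$ by a Vitali-type family of small sets on which the density is nearly constant, on each piece use the affine function \emph{tangent} to $\zeta$ at that local value together with weak convergence of the Monge--Amp\`ere measures of $u_k$ on compact subsets of $\interior(\dom u)$ (which follows from the locally uniform convergence you cite), and then estimate separately the contribution of a thin neighborhood of $\partial(\dom u)$ by Jensen's inequality, using the uniform mass bound $V_n(B^n)M^n$ from $L_{u_k}\leq M$, the volume convergence of Lemma \ref{volume1}, and $\zeta(t)/t\to 0$; the same boundary-type estimate also disposes of the degenerate case $V_n(\dom u)=0$. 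None of this is carried out in the proposal, so as written it is a plausible plan rather than a proof.

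Two smaller inaccuracies: concentration of Monge--Amp\`ere mass in the limit can occur in the interior of $\dom u$ (wherever $u$ develops a corner or the singular part of its Monge--Amp\`ere measure lives), not only on $\partial(\dom u)$, and the localization argument must accommodate this; and the role of the uniform Lipschitz bound is to control the \emph{total} Monge--Amp\`ere mass and to force convergence of the domains (Lemma \ref{volume1}), not to ``prevent mass loss to infinity'' --- for upper semicontinuity the enemy is concentration, not escape of mass.
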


\begin{obs}\label{obs}
In Theorem \ref {maintheorem} we consider Lipschitz functions because the functional 
$$\int_{\dom u}\zeta(u''(x))\dif x$$ 
is not finite for every $u\in\LC\NN$ and every $\zeta\in\conc$. An example is  $u(x)=-\sqrt{x}+\I_{[0,1]}(x)$ for $x\in\mathbb{R}$ and $\zeta(t)=t^{2/3}$ for $t\geq 0$. 

Furthermore, with respect to epi-convergence, consider the sequence
$$u_k(x) = kx^2 + \I_{[0,1]}(x), \quad k \in \mathbb{N}.$$ 
We have that $u_k \in \CD\NN$ epi-converges to $u = \I_{\{0\}}$ as $k \to +\infty$, but 
$$\int_{\dom u_k} \zeta(u_k''(x))  \dif x$$
does not converge, while the corresponding integral for the limit function vanishes. 
Note that the Lipschitz constants of the sequence $\{u_k\}_k$ are not uniformly bounded. Moreover,  $V_1(\dom u)$ is not continuous with respect to epi-convergence.  For instance, the sequence $u_k$ above epi-converges to $\I_{\{0\}}$, 
but $V_1(\dom u_k)$ does not converge to $V_1(\dom u)$ (cf.\ Example 1 in \cite{integral}).
\end{obs}

The following result establishes key properties of $Z$ in this more general framework.

\begin{lema}\label{concave} 
For $\zeta\in\conc$, the function $Z:\LC\RR\rightarrow [0,\infty)$ defined by $$Z(u)=\int_{\dom u}\zeta(\det\D u(x))\dif x,$$ is an $\V\N$, translation and dually epi-translation invariant valuation. \end{lema}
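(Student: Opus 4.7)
My plan is to verify each of the four properties (valuation, translation invariance, $\V\N$ invariance, and dual epi-translation invariance) directly from the integral representation, using change of variables and a decomposition of the domain for the valuation property. Finiteness of $Z$ is already guaranteed by Theorem \ref{mainteo}.

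For the \textbf{invariance properties}, the arguments are short computations. Given a translation $\tau_y(x)=x+y$, the substitution $z=x+y$ transforms $\int_{\dom u -y}\zeta(\det \D u(x+y))\dif x$ into $\int_{\dom u}\zeta(\det \D u(z))\dif z$, since $\D(u\circ\tau_y)(x)=\D u(x+y)$. For $\phi\in\V\N$, I use that $\dom(u\circ\phi)=\phi^{-1}(\dom u)$ and that $\D(u\circ\phi)(x)=\phi^{T}\D u(\phi x)\phi$, so $\det \D(u\circ\phi)(x)=(\det\phi)^{2}\det\D u(\phi x)=\det \D u(\phi x)$; the substitution $z=\phi x$ has unit Jacobian and gives $Z(u\circ\phi)=Z(u)$. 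For a linear function $l$ and a constant $c$, $\D(u+l+c)=\D u$ almost everywhere and $\dom(u+l+c)=\dom u$, so $Z(u+l+c)=Z(u)$.

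The \textbf{valuation property} is the main point. Given $u,v\in\LC\RR$ with $u\wedge v\in\LC\RR$ (hence automatically convex), I would decompose $\dom u\cup\dom v$ into four disjoint pieces:
\begin{align*}
E_1=\dom u\setminus\dom v,\quad E_2=\dom v\setminus\dom u,\quad A=\{u<v\}\cap\dom u\cap \dom v,\\
B=\{v<u\}\cap\dom u\cap\dom v,\quad C=\{u=v\}\cap\dom u\cap \dom v.
\end{align*}
By Lemma \ref{domain}, $\dom(u\wedge v)=E_1\cup E_2\cup A\cup B\cup C$ and $\dom(u\vee v)=A\cup B\cup C$. Using Aleksandrov's theorem, all of $u,v,u\wedge v,u\vee v$ are twice differentiable almost everywhere. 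On $E_1$ and on $A$ (which is open in $\dom u\cap\dom v$) we have $u\wedge v\equiv u$ locally, so $\D(u\wedge v)=\D u$ a.e., and likewise $u\vee v\equiv v$ on $A$; the symmetric statements hold on $E_2$ and $B$. On the contact set $C$, wherever $u,v$ are both twice differentiable almost every Lebesgue-density point satisfies $\D u=\D v$ (since $u-v$ vanishes on a set of positive density there), which forces $\D(u\wedge v)=\D(u\vee v)=\D u=\D v$ a.e.\ on $C$. Adding the integrals over the five pieces for $u\wedge v$ and $u\vee v$ and regrouping by $\dom u = E_1\cup A\cup B\cup C$ and $\dom v = E_2\cup A\cup B\cup C$ gives
\[
Z(u\wedge v)+Z(u\vee v)=Z(u)+Z(v).
\]

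The step I expect to be the most delicate is the analysis on the contact set $C$: justifying that $\D(u\wedge v)$ agrees almost everywhere with $\D u$ (and with $\D v$) on $C$ requires the Aleksandrov second differentiability together with a Lebesgue-density / contact-set argument, rather than a purely pointwise comparison. The invariance computations are routine by contrast and should occupy only a few lines each.
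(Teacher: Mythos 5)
Your proof is correct and follows essentially the same route as the paper: the invariances are the same change-of-variables computations, and the valuation identity is the same regrouping of the integrals over $\dom u\setminus\dom v$, $\dom v\setminus\dom u$ and $\dom u\cap\dom v$ provided by Lemma \ref{domain}. The only difference is that you make explicit, via the density-point argument on the contact set $\{u=v\}$, why $\D(u\wedge v)$ and $\D(u\vee v)$ agree almost everywhere with $\D u$ and $\D v$ on the intersection, a step the paper's proof uses implicitly without comment.
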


\begin{proof}
Using  Lemma \ref{domain}, we get  
\begin{align*}
&Z(u)+Z(v)  = \int_{\dom u}\zeta(\det\D u(x))\dif x + \int_{\dom v}\zeta( \det\D v(x))\dif x\\
& =\int_{\dom u\setminus \dom v}\zeta(\det\D u(x))\dif x+\int_{\dom u\cap \dom v}\zeta(\det\D u(x))\dif x   \\
& \quad + \int_{\dom v\setminus \dom u}\zeta(\det\D v(x))\dif x+\int_{\dom u\cap \dom v}\zeta(\det\D v(x))\dif x\\
& = \left(\int_{\dom u\setminus \dom v}\zeta(\det\D (u\wedge v)(x))\dif x+ \int_{\dom v\setminus \dom u}\zeta(\det\D(u\wedge v)(x))\dif x\right. \\
& \quad  \left.+ \int_{\dom u\cap \dom v}\zeta(\det\D (u\wedge v)(x))\dif x \right)+ \int_{\dom u\cap \dom v}\zeta(\det\D(u\vee v)(x))\dif x\\
& = \int_{\dom u\wedge v}\zeta(\det\D(u\wedge v)(x))\dif x+\int_{\dom u\vee v}\zeta(\det\D (u\vee v)(x))\dif x\\ \medskip
& = Z(u\wedge v) +Z(u\vee v)
\end{align*}
whenever $u,v,u\wedge v\in \LC\RR$. Thus,  $Z$ is a valuation.
\medskip

Now let   $\tau_y(x)=x+y$, where $y\in\mathbb{R}^n$. Then
\begin{align*}
\int_{\dom(u\circ \tau_y^{-1})}\zeta(\det\D(u\circ \tau_y)(x))\dif x &= \int_{\dom u-y}\zeta(\det\D u(x+y))\dif x\\
&= \int_{\dom u}\zeta(\det\D u(x))\dif x,    
\end{align*}
and for  an affine  function $w$, we have
$$\int_{\dom u}\zeta(\det\D(u+w)(x))\dif x = \int_{\dom u}\zeta(\det\D u(x))\dif x.$$
Furthermore, for $\varphi\in\V\N$ we obtain
\begin{align*}
Z(u\circ\varphi)=  \int_{\varphi^{-1}(\dom u)}\zeta(\det\D u\circ\varphi(x))\dif x&= \int_{\varphi^{-1}(\dom u)}\zeta(\det\D u(\varphi(x)))\dif x\\
&= \int_{\dom u}\zeta(\det\D u(x))\dif x\\
&= Z(u).    
\end{align*}
This concludes the proof.
\end{proof}

We say that a valuation $Z: \LC\RR\rightarrow \mathbb R$ is \textit{simple} if $Z(u)=0$ for every function $u\in \LC\RR$ such that $\dom u$ is not full dimensional. The following lemma will be used to prove Theorem \ref{maintheorem}.
\begin{lema}\label{non-positive}
Let $Z: \LC\RR\rightarrow \mathbb R$ be  a simple, $\tau$-upper semicontinuous, $\V\N$, translation and dually epi-translation invariant valuation. Then there exists a constant $c_1\in\mathbb{R}$ such that for every polytope $P\subset \mathbb{R}^n$ it holds
\begin{align*}
   Z(\I_P) = c_1 V_n(P).
\end{align*}
\end{lema}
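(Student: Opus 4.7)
The plan is to pass from $Z$ on $\LC\RR$ to the induced functional $\widetilde Z(P):=Z(\I_P)$ on convex polytopes, verify that $\widetilde Z$ is a simple, translation and $\V\N$ invariant, Hausdorff upper semicontinuous valuation, and then classify $\widetilde Z$ using the $\V\N$ action on $n$-simplices together with a Cauchy-type dissection argument.

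First I would check that $\widetilde Z$ inherits the required properties from $Z$. If $P$, $Q$ and $P\cup Q$ are convex polytopes, then $\I_{P\cup Q}=\I_P\wedge \I_Q$ and $\I_{P\cap Q}=\I_P\vee \I_Q$ both lie in $\LC\RR$, so the valuation identity for $Z$ transfers to $\widetilde Z$; translation and $\V\N$ invariance follow from $\I_P\circ\tau^{-1}=\I_{\tau P}$ and $\I_P\circ\varphi=\I_{\varphi^{-1}P}$, and simplicity transfers because $\dom\I_P=P$. For upper semicontinuity, I would verify that Hausdorff convergence $P_k\to P$ of convex polytopes implies $\I_{P_k}\seq \I_P$: condition (iii) is trivial since every indicator function is Lipschitz with constant $0$ on the interior of its domain, while (i) and (ii) follow from the standard correspondence between Hausdorff convergence of closed convex sets lying in a fixed bounded region and Painlev\'e--Kuratowski convergence of their epigraphs.

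Next I would restrict $\widetilde Z$ to the family of $n$-simplices. Any two $n$-simplices of equal positive volume are related by an element of $\V\N\ltimes\mathbb{R}^n$: after translating a common vertex to the origin and, if necessary, swapping two vertices to match orientations, the linear map carrying one ordered tuple of edge vectors to the other has determinant $1$. Hence $\widetilde Z(T)=g(V_n(T))$ for a single function $g\colon[0,\infty)\to\mathbb{R}$. To obtain a Cauchy equation for $g$, I would cut the simplex $T=\conv(v_0,\dots,v_n)$ by the hyperplane through $v_1,\dots,v_{n-1}$ and a point $p=(1-t)v_0+tv_n$ with $t\in(0,1)$; this yields two $n$-simplices $T_1,T_2$ with $T_1\cup T_2=T$ convex and $T_1\cap T_2$ an $(n-1)$-simplex, and a direct determinant computation gives $V_n(T_1)=tV_n(T)$ and $V_n(T_2)=(1-t)V_n(T)$. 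The valuation identity together with simplicity then yields $\widetilde Z(T)=\widetilde Z(T_1)+\widetilde Z(T_2)$, so $g(a+b)=g(a)+g(b)$ for all $a,b\ge 0$. Since $g$ is upper semicontinuous (the map $t\mapsto tT_0$ is Hausdorff continuous for a fixed standard simplex $T_0$), the classical regularity theorem for additive functions forces $g(t)=c_1 t$ for some $c_1\in\mathbb{R}$.

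Finally, an arbitrary convex polytope $P$ is dissected into $n$-simplices $T_1,\dots,T_m$ by iterated hyperplane cuts: at each stage one writes $P'=P'_1\cup P'_2$ with $P'_1\cap P'_2$ a common face of lower dimension, and the valuation identity together with simplicity gives $\widetilde Z(P')=\widetilde Z(P'_1)+\widetilde Z(P'_2)$. Iterating yields $\widetilde Z(P)=\sum_{i=1}^m \widetilde Z(T_i)=c_1\sum_{i=1}^m V_n(T_i)=c_1 V_n(P)$, which proves the lemma. The most delicate point is arranging the dissections so that the convex-union hypothesis of the valuation identity is respected at every stage: in particular, the Cauchy step requires cutting through $n-1$ vertices rather than along an arbitrary hyperplane, since only then do both pieces remain $n$-simplices whose union is again the original simplex.
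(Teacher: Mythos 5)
Your proposal is correct and follows essentially the same route as the paper: reduce to $n$-simplices via simplicity and dissection, use translation and $\V\N$ invariance to see that $Z(\I_S)$ depends only on $V_n(S)$, obtain Cauchy's functional equation by splitting a simplex into two simplices, and conclude linearity from upper semicontinuity via the classical regularity result. You merely make explicit several steps the paper leaves implicit, such as the Hausdorff-to-$\tau$ convergence of indicator functions, the concrete simplex cut, and the $\V\N\ltimes\mathbb{R}^n$ equivalence of equal-volume simplices.
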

\begin{proof}
Since $Z$ is a simple valuation, it suffices to prove the result for $n$-dimensional polytopes and, since every polytope $P$ can be dissected into simplices, i.e., the convex hull of $n+1$ points in $\mathbb{R}^n$,
it is enough  to prove the result for simplices. Let  $S$ be a simplex. Since $Z$ is simple, $\V\N$ and translation invariant valuation, the value $Z(\I_S)$ depends only on the volume of $S$. Thus, we can define the function $g:[0,+\infty)\rightarrow \mathbb{R}$  by
$$g(x)= Z(\I_S),$$
where $x=V_n(S)$ is the volume of the simplex $S$. Now, let $x_1,x_2\geq 0$ be the volumes of two simplices  $S_1$ and $S_2$, respectively. Assume that $S_1\cup S_2$ is  also a simplex with volume $x_1+x_2$.
Since $Z$ is a simple valuation, we have the additivity property 
$$Z(\I_{S_1\cup S_2})= Z(\I_{S_1})+Z(\I_{S_2}),$$
which implies that
$g(x_1+x_2)=g(x_1)+g(x_2)$. Thus, the function $g$ satisfies Cauchy's functional equation. Since $Z$ is $\tau$-upper semicontinuous, it follows that $g$ is upper semicontinuous, i.e,
$$\limsup_{k\to \infty} g(x_k)\le g(x)$$
whenever $x_k\to x$.  By a standard result (see, for example, \cite{aczel}),  there exists a constant $c_1$ such that $g(x)=c_1x$, and consequently
$$Z(\I_S)=c_1V_n(S)$$
for every simplex $S$. 
\end{proof}

Let $P, P_1,\dots, P_m\subset\mathbb{R}^n$ be polytopes  such that  $P=\bigcup_{i=1}^m P_i$ is also a polytope, and the interiors of the $P_i's$ are pairwise disjoint. A function $u\in \LC\RR$ belongs to  $\PA(\mathbb{R}^n)$  if there exist polytopes $P_1,\dots, P_m$ as before such that 
\begin{align*}
u=\bigwedge_{i=1}^m(w_i+\I_{P_i}),    
\end{align*}
where $w_i:\mathbb{R}^n\rightarrow \mathbb{R}$ are affine functions for each $i=1,\dots, m$.  By [\citealp{1}, Corollary 12],  there exists a sequence $u_k\in\PA\RR$ that epi-converges to $u\in \LC\RR$. Moreover, we can choose this sequence $u_k$  such that the Lipschitz constant of the $u_k's$
are bounded by the Lipschitz constant of $u$. Therefore, we conclude that $u_k\in\PA\RR$ and   $u_k\seq u$, and we obtain the following result.
\begin{lema}\label{piecewise}
For every $u\in \LC\RR$, there exists a sequence $u_k\in \PA(\mathbb{R}^n)$ that is $\tau$-convergent to $u$.
\end{lema}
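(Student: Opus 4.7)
The plan is to reduce the statement entirely to the approximation result already cited, since that result essentially furnishes everything one needs except the verification that the three clauses defining $\seq$ hold.

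First I would recall what $\tau$-convergence demands: clauses (i) and (ii) together are exactly the standard definition of epi-convergence of $u_k$ to $u$, while clause (iii) is the separate requirement that the Lipschitz constants $L_{u_k}$ be uniformly bounded. So the proof splits into two independent tasks, one for epi-convergence and one for the Lipschitz bound.

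Next I would invoke \cite[Corollary 12]{1} to produce a sequence $u_k\in\PA(\mathbb{R}^n)$ that epi-converges to $u$. This immediately delivers clauses (i) and (ii). The key point is that the same corollary, as noted in the paragraph preceding the statement, guarantees that the sequence can be chosen so that each $u_k$ has Lipschitz constant on $\interior(\dom u_k)$ bounded by $L_u$, the Lipschitz constant of $u$ on $\interior(\dom u)$. Taking $M:=L_u$ in clause (iii) then settles the uniform bound.

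I expect no real obstacle here: the content of the lemma is essentially a bookkeeping observation that the approximation scheme of \cite{1} already respects the Lipschitz control that was built into the definition of $\seq$. The only subtlety worth mentioning is that one must check $u_k\in\LC(\mathbb{R}^n)$, which follows because each $u_k$ is piecewise affine on a polytopal domain (hence Lipschitz with compact domain), placing $u_k$ in $\LC(\mathbb{R}^n)\cap \PA(\mathbb{R}^n)$ and allowing the $\tau$-convergence statement to be asserted within the ambient space where $Z$ is defined.
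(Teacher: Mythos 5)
Your proposal is correct and follows essentially the same route as the paper: the paper's own justification is precisely the invocation of \cite[Corollary 12]{1} for epi-convergence together with the observation that the approximating piecewise affine functions can be chosen with Lipschitz constants bounded by $L_u$, giving clause (iii) with $M=L_u$. Your additional remark that each $u_k$ lies in $\LC(\mathbb{R}^n)$ is immediate from the definition of $\PA(\mathbb{R}^n)$ as a subset of $\LC(\mathbb{R}^n)$, so nothing is missing.
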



To prove Proposition \ref{prop1} in Section \ref{sec3}, we will use the following version of the Vitali covering theorem.   A collection of sets $\mathcal{C}$ is called a \emph{Vitali class} for $S \subset \mathbb{R}^n$ if, for each $x \in S$ and each $\delta > 0$, there exists $U \in \mathcal{C}$ such that $x \in U$ and $0 < V_n(U) \le \delta$.

\begin{teo}[\cite{2}, Theorem 1.10]\label{VCT}
Let $S \subset \mathbb{R}^n$ be a bounded set, and let $\mathcal{C}$ be a Vitali class for $S$. Then for any $\varepsilon > 0$, there exists a finite collection of disjoint sets $U_1, \dots, U_m \in \mathcal{C}$ such that
\[
V_n\Big(S \setminus \bigcup_{i=1}^m U_i \Big) < \varepsilon.
\]
\end{teo}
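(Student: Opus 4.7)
The plan is to apply a standard greedy selection argument of Vitali type: first extract a countable disjoint family from $\mathcal{C}$ that almost covers $S$, then truncate it to a finite subfamily whose leftover has arbitrarily small Lebesgue measure.

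First I would reduce to a bounded ambient set. Since $S$ is bounded, fix a bounded open set $V \supset S$, and replace $\mathcal{C}$ by the subclass $\mathcal{C}' = \{U \in \mathcal{C} : U \subset V\}$. This is still a Vitali class for $S$: the Vitali property allows us to insist on arbitrarily small $U$ containing $x \in S$, and every sufficiently small neighborhood of $x$ lies inside $V$. In particular $\sup_{U \in \mathcal{C}'} V_n(U) \le V_n(V) < \infty$.

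Next I would build a disjoint sequence inductively. Pick $U_1 \in \mathcal{C}'$ with $V_n(U_1) > \tfrac{1}{2}\sup_{U \in \mathcal{C}'} V_n(U)$. Given disjoint $U_1, \dots, U_k$, set
\[
d_k = \sup\{V_n(U) : U \in \mathcal{C}',\ U \cap (U_1 \cup \dots \cup U_k) = \emptyset\}.
\]
If $d_k = 0$, the Vitali property forces $V_n(S \setminus \bigcup_i U_i) = 0$ and any finite truncation works. Otherwise choose $U_{k+1}$ disjoint from its predecessors with $V_n(U_{k+1}) > d_k/2$. Since the $U_k$ are disjoint and all lie in $V$, we have $\sum_k V_n(U_k) \le V_n(V) < \infty$, and in particular $V_n(U_k) \to 0$. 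Given $\varepsilon > 0$, fix $m$ so that $\sum_{k > m} V_n(U_k) < \varepsilon / C$, where $C$ is a geometric enlargement constant (for example $C = 5^n$ when the members of $\mathcal{C}$ are balls).

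The core step is the covering claim $S \setminus \bigcup_{k \le m} U_k \subset \bigcup_{k > m} U_k^*$, where $U_k^*$ is a controlled enlargement of $U_k$ with $V_n(U_k^*) \le C\, V_n(U_k)$. Given $x$ in the leftover, the Vitali property furnishes a small $U \in \mathcal{C}'$ with $x \in U$ and $U$ disjoint from $U_1, \dots, U_m$. Since $V_n(U_k) \to 0$ while $V_n(U) > 0$, the set $U$ cannot remain a valid candidate forever, so there is a least $j > m$ with $U \cap U_j \ne \emptyset$. At stage $j$, $U$ was still a valid candidate, hence $V_n(U) \le 2 V_n(U_j)$; combined with the nontrivial intersection, the geometric regularity of $\mathcal{C}$ forces $U \subset U_j^*$. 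Summing the enlargement bound over $j > m$ yields $V_n(S \setminus \bigcup_{k \le m} U_k) \le C \sum_{j > m} V_n(U_j) < \varepsilon$. The main obstacle is precisely this last step: the existence of the enlargement factor $C$ depends on the regularity hypothesis implicit in the notion of Vitali class (balls, sets of bounded eccentricity, or controlled shape ratio), and I would have to pin down the exact regularity assumed in Falconer's Theorem 1.10 to make the final estimate rigorous.
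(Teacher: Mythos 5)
First, note that the paper does not prove this statement at all: it is quoted verbatim from Falconer \cite{2} (Theorem 1.10) and used as an external tool, so there is no internal proof to compare against; your task was therefore to supply a complete proof, and the one you sketch has a genuine gap at exactly the point you flag. The definition of a Vitali class used here requires only that for each $x\in S$ and $\delta>0$ there is $U\in\mathcal{C}$ with $x\in U$ and $0<V_n(U)\le\delta$ --- smallness is measured by \emph{Lebesgue measure}, with no shape or diameter control. Your core step needs: if $U\cap U_j\neq\emptyset$ and $V_n(U)\le 2V_n(U_j)$, then $U\subset U_j^*$ with $V_n(U_j^*)\le C\,V_n(U_j)$. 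No such constant exists for general sets: $U$ may be a long thin set of tiny measure meeting $U_j$ but escaping every bounded enlargement of it, so the inclusion $S\setminus\bigcup_{k\le m}U_k\subset\bigcup_{k>m}U_k^*$ and the final estimate collapse. The same measure-versus-diameter confusion undermines two earlier steps: the reduction to $\mathcal{C}'=\{U\in\mathcal{C}: U\subset V\}$ (a set of small measure containing $x$ need not have small diameter, so $\mathcal{C}'$ may fail to be a Vitali class), and the claim that $d_k=0$ forces $V_n\bigl(S\setminus\bigcup_i U_i\bigr)=0$ (a small-measure set containing a point $x\notin\bigcup_{i\le k}U_i$ can still meet $U_1\cup\dots\cup U_k$). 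Your argument is the standard one for balls, or for families of uniformly bounded eccentricity, but it does not prove the theorem as stated.

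The repair is not a matter of ``pinning down a constant'': Falconer's actual proof runs the greedy selection on \emph{diameters} (choose $U_{i+1}$ with $|U_{i+1}|\ge\tfrac12\sup\{|U|\colon U\in\mathcal{C},\ U\cap U_j=\emptyset,\ j\le i\}$, for closed sets $U$ with smallness measured by diameter), and the halting argument compares diameters: the stray set $U$ meets some later $U_j$ with $|U|\le 2|U_j|$, hence lies in a ball of radius $3|U_j|$ centred in $U_j$, giving $\mathcal{H}^s\bigl(S\setminus\bigcup_{i\le m}U_i\bigr)\le\sum_{j>m}(6|U_j|)^s$, with the dichotomy $\sum_i|U_i|^s=\infty$ or $\mathcal{H}^s\bigl(S\setminus\bigcup_i U_i\bigr)=0$ handling the divergent case. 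To obtain the Lebesgue-measure statement used in this paper one either works with that diameter-based selection (noting $\mathcal{H}^n$ is comparable to $V_n$) together with the regularity of the sets actually fed into Theorem \ref{VCT} in Proposition \ref{prop1} --- in the application $n=1$ and the sets are essentially intervals intersected with a full-measure set, where diameter and measure are comparable --- or one invokes a Besicovitch/Vitali theorem for such a regular class directly. As written, your proof is valid only under an unstated bounded-eccentricity hypothesis and does not establish the cited statement.
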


\section{Proof of  Theorem \ref{maintheorem}}\label{sec3}
We now focus on the one-dimensional case ($n=1$) to prove Theorem \ref{maintheorem}. 
Specifically, we aim to show that if $Z:\LC\NN\rightarrow\mathbb{R}$ is a $\tau$-upper semicontinuous, translation   and dually epi-translation invariant valuation, then there exist  constants $c_0,c_1\in\mathbb{R}$ and a function  $\zeta\in\conc$ such that
\begin{align*}
Z(u)= c_0+c_1V_1(\dom u)+\int_{\mathbb{R}}\zeta(u''(x))\dif x
\end{align*}
for every $u\in \LC\NN$. We will adapt elements from the proof of the main theorem in  \cite{71}.  As a first step, we focus on the case where $Z$ is a simple, $\tau$-upper semicontinuous, translation and dually epi-translation invariant valuation that vanishes on indicator functions of closed intervals.

By Lemma \ref{piecewise}, every function in $\LC\NN$ can be approximated by  functions  $u_k\in \PA(\mathbb{R}^n)$. Since  $Z$ is dually epi-translation invariant, vanishes on indicator functions of closed intervals, and is  a $\tau$-upper semicontinuous valuation, we obtain
\begin{align}\label{znon_negative}
    Z(u)\geq 0
\end{align}
for every $u\in \LC\NN$.

Given $m>0$,    define the function $\zeta:[0,\infty)\rightarrow \mathbb{R}$ by
\begin{align}\label{eq4}
\zeta(a)= \frac{1}{2m}Z(aq +\I_{[-m,m]}),
\end{align}
where  
\begin{align}\label{q}
q(x)= \frac{x^2}{2}    
\end{align}
for $x\in\mathbb{R}$ and $a>0$. Throughout the text, $q$ will denote the quadratic function defined in \eqref{q}. Moreover, for a  closed interval $J \subset \mathbb{R}$, define the function $g_a : [0,\infty) \rightarrow \mathbb{R}$ by
\begin{align*}
g_a(x) = Z(a q + \I_J), \quad x = V_1(J),
\end{align*}
where $V_1(J)$ denotes the length of $J$. Note that in this case, the function $g_a$ depends on $a$, and   it is well-defined since $Z$ is simple, translation and  dually epi-translation invariant. 

By Lemma \ref{lemmasum} and the fact that $Z$ is simple, we have that
\begin{align*}
    g_a(x_1+x_2)= g_a(x_1)+g_a(x_2),
\end{align*}
whenever  $x_1,x_2\geq 0$. Since $Z$ is $\tau$-upper semicontinuous, it follows that $g_a$ is upper semicontinuous. Therefore, $g_a$ is a solution to Cauchy's functional equation, and   there exists a constant $c=c(a)$ such that
$$g_a(x)=cx.$$
Thus, we obtain
$$Z(aq+ \I_J)= cV_1(J)$$
for every closed interval $J\subset \mathbb{R}$. Combining this with \eqref{eq4}, we deduce
\begin{align}
\label{eq9a} Z(aq+ \I_{J})&= \frac{Z(aq+ \I_{[-m,m]})}{2m}V_1(J)\\
& =\zeta(a)V_1(J).\label{eq9}
\end{align}

The following  lemmas establish key properties of the function $\zeta$ defined in \eqref{eq4}, which will be crucial in the proof of Proposition~\ref{prop}.   The following result is a consequence of \eqref{znon_negative}.
\begin{lema}
Let $\zeta$ be defined by \eqref{eq4}. Then $\zeta$ is non-negative.\label{zetapositive}
\end{lema}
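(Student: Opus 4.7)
The plan is essentially a one-line reduction to the inequality \eqref{znon_negative}, so the main task is simply to verify that the argument $aq + \I_{[-m,m]}$ of $Z$ in the definition of $\zeta$ genuinely lies in the space $\LC(\mathbb{R})$.

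First, I would fix $a \ge 0$ and $m > 0$ and observe that the function $u_a := aq + \I_{[-m,m]}$ is convex (being the sum of two convex functions), lower semicontinuous, proper, with $\dom u_a = [-m,m]$, which is compact. On the interior $(-m,m)$, the function coincides with $aq(x) = ax^2/2$, which is $C^\infty$, and its derivative $ax$ is bounded in absolute value by $am$ on $[-m,m]$. Hence $u_a$ is Lipschitz on $\interior(\dom u_a)$ with Lipschitz constant at most $am$. This shows $u_a \in \LC(\mathbb{R})$, so $Z(u_a)$ is defined.

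Next, by the standing assumption on $Z$ in this step of the proof (simple, $\tau$-upper semicontinuous, translation and dually epi-translation invariant, vanishing on indicator functions of closed intervals), the inequality \eqref{znon_negative} applies to every element of $\LC(\mathbb{R})$, and in particular to $u_a$. Therefore
\begin{equation*}
Z(aq + \I_{[-m,m]}) \ge 0.
\end{equation*}
Dividing by $2m > 0$ yields $\zeta(a) \ge 0$ for every $a \ge 0$. In the boundary case $a = 0$, one may alternatively observe directly that $\zeta(0) = \frac{1}{2m} Z(\I_{[-m,m]}) = 0$, since $Z$ vanishes on indicator functions of closed intervals.

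There is no real obstacle here; the only thing to be careful about is that \eqref{znon_negative} was derived precisely under the assumptions imposed on $Z$ in this portion of the argument, so the lemma is valid in this restricted setting and should not be misread as a property of arbitrary valuations.
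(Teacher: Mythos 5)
Your proof is correct and follows the same route as the paper, which simply notes that the lemma is an immediate consequence of the inequality \eqref{znon_negative} applied to $aq+\I_{[-m,m]}$. The extra verification that $aq+\I_{[-m,m]}\in\LC\NN$ and the observation that $\zeta(0)=0$ are routine but harmless additions.
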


\begin{lema}\label{lemma1a}
Let $\zeta$ be defined by \eqref{eq4}. Then $\zeta$ is concave.
\end{lema}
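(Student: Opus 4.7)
The plan is to first establish midpoint concavity, $\zeta\bigl(\tfrac{a+b}{2}\bigr)\geq \tfrac{\zeta(a)+\zeta(b)}{2}$ for all $a,b\geq 0$, by constructing an approximating sequence of piecewise quadratic convex functions whose $\tau$-limit has constant second derivative $(a+b)/2$. Combined with upper semicontinuity of $\zeta$, this will yield full concavity.

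As a preliminary step, I would extend \eqref{eq9} to convex piecewise quadratic functions: if $u\in\LC\NN$ has domain $[\alpha,\beta]$ partitioned into consecutive closed subintervals $J_1,\dots,J_k$ with $u''\equiv a_i$ on the interior of $J_i$, then
\[
Z(u) \;=\; \sum_{i=1}^{k}\zeta(a_i)\,V_1(J_i).
\]
This is proved by induction on $k$: write $u=\bigwedge_{i=1}^{k}f_i$, where $f_i$ is $u|_{J_i}$ extended by $+\infty$ off $J_i$. At each inductive step $\bigl(\bigwedge_{i\leq j}f_i\bigr)\vee f_{j+1}$ has domain equal to the single shared endpoint $\{x_j\}$, and by continuity of $u$ equals $u(x_j)+\I_{\{x_j\}}$, which vanishes under $Z$ by simpleness and vertical translation invariance. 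Each individual term $Z(f_i)$ reduces to $\zeta(a_i)V_1(J_i)$ by \eqref{eq9} after translating the vertex of the parabolic piece to the origin and discarding the linear and constant parts via dual translation invariance.

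For the midpoint inequality, fix $a,b\geq 0$ and, for each $N\geq 1$, partition $[-m,m]$ into $2N$ equal closed subintervals. Let $w_N\in\LC\NN$ be the convex piecewise quadratic function on $[-m,m]$ with $w_N(-m)=w_N'(-m)=0$ and $w_N''$ alternating between $a$ and $b$ on consecutive subintervals. By the preliminary formula, $Z(w_N)=m\bigl(\zeta(a)+\zeta(b)\bigr)$ for every $N$. The indefinite integral $\int_{-m}^{x}\!\bigl(w_N''-\tfrac{a+b}{2}\bigr)$ oscillates with amplitude $O(1/N)$, so $w_N$ converges uniformly on $[-m,m]$ to $\tilde{w}(x)=\tfrac{a+b}{4}(x+m)^2+\I_{[-m,m]}(x)$. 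Since the Lipschitz constants of the $w_N$'s are uniformly bounded by $2m\max(a,b)$, we obtain $w_N\xrightarrow{\tau}\tilde{w}$; then $\tau$-upper semicontinuity of $Z$ together with $Z(\tilde{w})=2m\,\zeta\bigl(\tfrac{a+b}{2}\bigr)$ (from \eqref{eq9} after a translation placing the vertex at the origin) yields $2m\,\zeta\bigl(\tfrac{a+b}{2}\bigr)\geq m\bigl(\zeta(a)+\zeta(b)\bigr)$.

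To conclude, I would verify that $\zeta$ is upper semicontinuous: if $a_k\to a$, the sequence $a_kq+\I_{[-m,m]}$ converges uniformly on the fixed compact domain with uniformly bounded derivatives, so it is $\tau$-convergent to $aq+\I_{[-m,m]}$, and $\tau$-upper semicontinuity of $Z$ transfers directly to $\zeta$. Iterating the midpoint inequality yields concavity along dyadic convex combinations, which upper semicontinuity of $\zeta$ then extends to all convex combinations by a standard approximation argument. I expect the main obstacle to be the preliminary inductive decomposition, since one must carefully verify that adjacent quadratic pieces share exactly one boundary point (so the supremum is a singleton) and that the resulting point-supported contributions are discarded by simpleness together with vertical translation invariance.
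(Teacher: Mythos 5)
Your argument is correct, and its engine is essentially the paper's: approximate a parabola by convex piecewise quadratic functions alternating between two prescribed curvatures, evaluate $Z$ on the approximants via the decomposition $Z=\sum_i\zeta(a_i)V_1(J_i)$ (which the paper uses implicitly through simplicity, dual epi-translation invariance and \eqref{eq9}, and which your induction over the shared endpoints makes explicit), and then invoke $\tau$-upper semicontinuity. The genuine difference lies in how the approximants are built and how full concavity is reached. The paper fixes curvatures $2r<2a<2s$ and solves the tangency conditions \eqref{eq34}--\eqref{eq35}, so the subintervals automatically occur in the proportion $\lambda=\frac{a-r}{s-r}$ and the u.s.c. inequality gives concavity for an arbitrary convex combination in one stroke, at the price of the computations \eqref{eq***}--\eqref{xiy}. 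You instead prescribe $w_N''$ on equal subintervals and integrate twice, which is more elementary (convexity and $C^1$ matching of the pieces are automatic), but it only yields midpoint concavity, so you need two extra steps: upper semicontinuity of $\zeta$ (your verification via $a_kq+\I_{[-m,m]}\seq aq+\I_{[-m,m]}$ is fine, since the domains are all $[-m,m]$ and the Lipschitz constants are uniformly bounded) and the dyadic-approximation upgrade, which is the standard and correct argument. Both routes are sound; note also that running your prescribe-and-integrate construction with interval lengths in proportion $\lambda:(1-\lambda)$ rather than $1:1$ would produce the limit parabola of curvature $\lambda a+(1-\lambda)b$ and hence full concavity directly, eliminating the extra upgrade and giving a proof that is arguably simpler than the paper's tangency computation.
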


\begin{proof}
To prove that $\zeta$ is concave, we use a geometric construction.  Let $$0\leq r<a<s,$$ 
and define  
$$q^a(x)=ax^2.$$  
Considering the points
$$p_i= -m+\left(\frac{2m}{n}\right)i,$$
where $p_0=-m,p_n=m$, and for $i=1,\dots, n-1$, we have $p_i\in (-m,m)$.
\medskip

Next, we define the family of functions
\begin{align*}
q_i^r(x)&= rx^2+(2a-2r)p_ix-(a-r)p_i^2,
\end{align*}
for $i=0,\dots, n-1$. We claim the following properties
$$q_i^r(p_i)=q^a(p_i), \quad (q_i^r)'(p_i)= (q^a)'(p_i) \quad \mbox{and} \quad (q_i^r+\I_{[-m,m]})(x)\leq (q^a+\I_{[-m,m]})(x),$$
for every $i=0,\dots, n-1$ and $x\in\mathbb{R}$. 
The first two properties ensure that the quadratic function $q_i^r$ matches the function $q^a$ at $x=p_i$ both in value and in derivative, while the third property ensures that $q_i^r$ lies below $q^a$ on the entire real line, but constrained by the interval $[-m,m]$. 
\medskip

Now, for each $i=1,\dots, n$, consider the function 
$$q_{i}^s(x)= sx^2+\beta_{i}x+\gamma_{i},$$
where $\beta_i$ and $\gamma_i$ are constants that are chosen such that the following conditions hold
\begin{align}\label{eq34}
\begin{cases}q_{i}^s(x_i) &= q_{i-1}^r(x_i)  \\
 (q_{i}^s)'(x_i) &= (q_{i-1}^r)'(x_i)  
 \end{cases}
\end{align}
for  some $x_i \in (p_{i-1},p_i)$, and  we also require that 
$$(q_{i-1}^r+\I_{[-m,m]})(x)\leq (q_{i}^s+ \I_{[-m,m]})(x)$$ 
for every $x\in\mathbb{R}$, which means that the function $q_{i}^s$ lies above $q_{i-1}^r$.

Similarly, we require the following for each $i=1,\dots, n$
\begin{align}\label{eq35}
   \begin{cases} q_{i}^s(y_i) &= q_{i}^r(y_i) \\
  (q_{i}^s)'(y_i) &= (q_{i}^r)'(y_i)
  \end{cases}
\end{align}
for  some $y_i \in (x_i,p_i)$, and  again we need 
$$(q_{i}^r+\I_{[-m,m]})(x)\leq (q_{i}^s+\I_{[-m,m]})(x)$$ 
for every $x\in\mathbb{R}$.

By  the second equations in \eqref{eq34} and  \eqref{eq35}, we have the following expressions for $x_i$ and $y_i$, respectively,
\begin{align}
\label{eq***}  x_{i}&= \left(\dfrac{a-r}{s-r}\right)p_{i-1}-\dfrac{\beta_i}{2s-2r} \\
 \label{eq****} y_{i}&= \left(\dfrac{a-r}{s-r}\right)p_i-\dfrac{\beta_i}{2s-2r}.
\end{align}
Thus
\begin{align}\label{yx}
y_{i}-x_{i}= \left(\frac{a-r}{s-r}\right)\frac{2m}{n},
\end{align}
which shows that  $y_i-x_i$ is a constant  that does not depend  on $i$. Moreover
\begin{align*}
x_{i+1}-y_{i}= \dfrac{\beta_{i}}{2s-2r} -\dfrac{\beta_{i+1}}{2s-2r}.
\end{align*}

From the first equation in \eqref{eq34} and  \eqref{eq***}, we obtain the condition
\begin{align}\label{eqt}
\left(\beta_i-(2a-2r)p_{i-1}\right)^2-4(s-r)\left(\gamma_i+(a-r)p_{i-1}^2\right)=0
\end{align}
and from the first equation in \eqref{eq35} and \eqref{eq****}, we get
\begin{align}\label{eqtt}
\left(\beta_i-(2a-2r)p_i\right)^2-4(s-r)\left(\gamma_i+(a-r)p_i^2\right)=0.
\end{align}
Using \eqref{eqt} and \eqref{eqtt}, we deduce that
\begin{align*}
\beta_i-\beta_{i+1}=\frac{4m}{n}((s-r)-(a-r)),
\end{align*}
which leads to the following expression for the difference between $x_{i+1}$ and $y_i$
\begin{align}\label{xiy}
\nonumber x_{i+1}-y_i&= \dfrac{1}{2s-2r}\left(\frac{4m}{n}((s-r)-(a-r))\right)\\
&= \dfrac{2m}{n}\left(1-\left(\frac{a-r}{s-r}\right)\right),
\end{align}
for every $i=1,\dots, n-1$. This means  that $x_{i+1}-y_i$  does not depend on $i$.
\medskip

We approximate the  function $q^a+\I_{[-m,m]}$ by a convex   function $v_n\in \LC\NN$, constructed as follows
\begin{multline*}
    v_n= (q_0^r+\I_{[-m,x_1]})\wedge (q_1^s+ \I_{[x_1,y_1]})\wedge(q_1^r+\I_{[y_1,x_2]})\wedge (q_2^s+\I_{[x_2,y_2]})\\
    \wedge \cdots \wedge (q_n^r+\I_{[y_n,m]}).
\end{multline*}
\begin{figure}[H]
\centering
\includegraphics[scale=0.12]{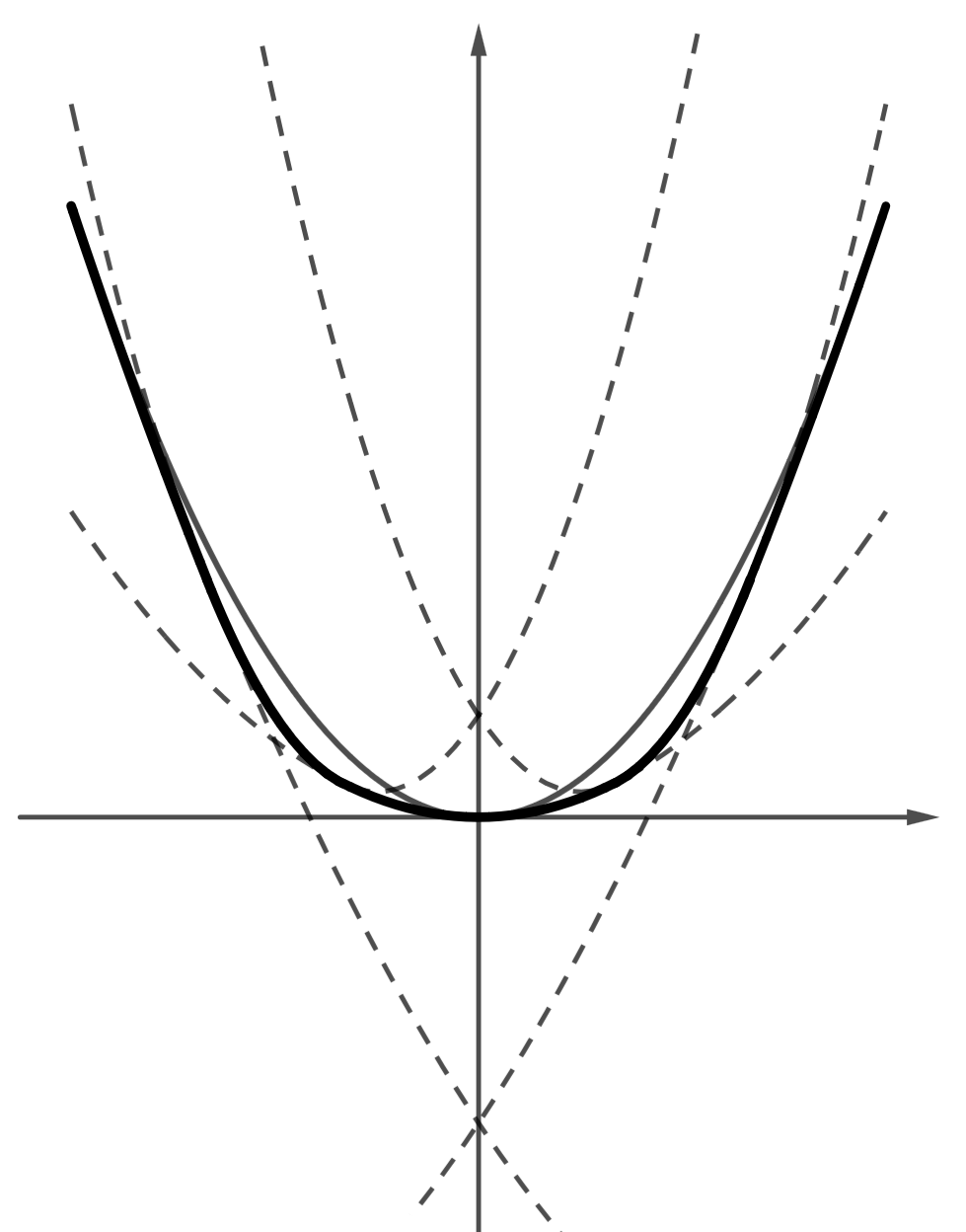}
\caption{Quadratic function $q^a$ (gray) and the corresponding approximation $v_n$ (black).
}
\end{figure}

Since $y_i-x_i$ and $x_{i+1}-y_i$ do not depend on $i$, and since $Z$ is a simple and dually epi-translation invariant valuation, we have
\begin{align}
\nonumber Z(v_n) &= n Z(q_1^s+\I_{[x_1,y_1]})+ nZ(q_1^r+\I_{[y_1,x_2]}),
\end{align}
and by \eqref{eq9a}, \eqref{yx}, and \eqref{xiy}, we get
\begin{align}
\nonumber Z(v_n)&= n(y_1-x_1)\dfrac{Z(q_1^s+\I_{[-m,m]})}{2m}+n(x_2-y_1)\dfrac{Z(g_1^r+\I_{[-m,m]})}{2m}\\
\label{eq5.10} &= \left(\frac{a-r}{s-r}\right)Z(q_1^s+\I_{[-m,m]})+ \left(1-\left(\frac{a-r}{s-r}\right)\right)Z(q_1^r+\I_{[-m,m]}).
\end{align}

Note that $Z(v_n)$ does not depend on $n$. To finish the proof  that $\zeta$ is concave, we  use  the fact that $v_n$ is  $\tau$-convergent to $q^a+\I_{[-m,m]}$ as $n\rightarrow +\infty$,  that $Z$ is $\tau$-upper semicontinuous and use  equation \eqref{eq5.10}, such that
\begin{align*}
2m \zeta(2a)&= Z(q^a+\I_{[-m,m]})\\
&\geq \limsup_{n\rightarrow+\infty}Z(v_n)\\
&= \limsup_{n\rightarrow+\infty} \left(\left(\frac{a-r}{s-r}\right)Z(q_1^s+\I_{[-m,m]})+ \left(1-\left(\frac{a-r}{s-r}\right)\right)Z(q_1^r+\I_{[-m,m]})\right)\\
&= 2m\left(\left(\frac{a-r}{s-r}\right)\zeta(2s)+ \left(1-\left(\frac{a-r}{s-r}\right)\right)\zeta(2r) \right).
\end{align*}
Setting $\lambda= \frac{a-r}{s-r}$, since $0\leq r<a<s$, we have that $0< \lambda<1$, and
$$\zeta(\lambda 2s + (1-\lambda)2r)\geq \lambda \zeta(2s) + (1-\lambda)\zeta(2r).$$
By the arbitrariness of $r$ and $s$, we conclude that
$$\zeta(\lambda s + (1-\lambda)r)\geq \lambda \zeta(s) + (1-\lambda)\zeta(r)$$
for every $\lambda\in (0,1)$, which shows that $\zeta$ is concave.    
\end{proof}
\medskip

\begin{lema}\label{lemma1}
Let $\zeta$ be defined by \eqref{eq4}. Then $\zeta \in \conc$.
\end{lema}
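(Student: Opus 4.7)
The definition of $\conc$ demands four properties of $\zeta$: non-negativity, concavity, $\lim_{t \to 0^+}\zeta(t) = 0$, and $\lim_{t \to +\infty}\zeta(t)/t = 0$. The first two are already supplied by Lemmas \ref{zetapositive} and \ref{lemma1a}, so the plan reduces to verifying the two boundary limits. In both cases the strategy is the same: exhibit a $\tau$-convergent family whose limit is annihilated by $Z$, then apply $\tau$-upper semicontinuity and combine with $\zeta \ge 0$ to promote $\limsup$ to $\lim$.

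For the limit at the origin, I would consider $u_a := aq + \I_{[-m,m]}$ and check directly that $u_a \seq \I_{[-m,m]}$ as $a \to 0^+$: epi-convergence holds because $u_a \to \I_{[-m,m]}$ pointwise, and the Lipschitz constants $L_{u_a} = am$ tend to $0$ and are therefore uniformly bounded. Since $Z$ vanishes on indicator functions of closed intervals (the reduction at the start of Section \ref{sec3}), $\tau$-upper semicontinuity together with $\zeta \ge 0$ gives $\lim_{a \to 0^+}\zeta(a) = (2m)^{-1}\lim_{a \to 0^+} Z(u_a) = 0$.

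The condition at infinity is the main obstacle. The naive attempt --- letting $aq + \I_{[-m,m]}$ epi-converge to $\I_{\{0\}}$ as $a \to +\infty$ --- fails condition (iii) in the definition of $\tau$-convergence, since the Lipschitz constant $am$ blows up. The key idea is to shrink the domain in tandem with $a$: applying the scaling identity \eqref{eq9} with $J = [-1/a,\,1/a]$ gives
\begin{align*}
\frac{\zeta(a)}{a} \;=\; \frac{1}{2}\, Z\!\left(aq + \I_{[-1/a,\,1/a]}\right).
\end{align*}
The function $v_a := aq + \I_{[-1/a,\,1/a]}$ has Lipschitz constant exactly $a \cdot (1/a) = 1$ on the interior of its domain, so the Lipschitz constants are now uniformly bounded. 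A routine verification shows $v_a \seq \I_{\{0\}}$: any $x_a \to x \neq 0$ eventually escapes $[-1/a,\,1/a]$, forcing $v_a(x_a) = +\infty$, while for $x = 0$ the constant sequence $x_a = 0$ yields $v_a(0) = 0$. Since $Z$ is simple, $Z(\I_{\{0\}}) = 0$, and $\tau$-upper semicontinuity together with $\zeta \ge 0$ gives
\begin{align*}
0 \;\le\; \limsup_{a \to +\infty}\frac{\zeta(a)}{a} \;=\; \frac{1}{2}\limsup_{a \to +\infty} Z(v_a) \;\le\; \frac{1}{2}Z(\I_{\{0\}}) \;=\; 0,
\end{align*}
which completes the verification that $\zeta \in \conc$. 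It is worth noting that this argument highlights precisely why the Lipschitz clause (iii) is built into the definition of $\tau$-convergence: without the compensating rescaling of $J$, the limit at infinity could not be forced to vanish.
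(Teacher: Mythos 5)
Your proposal is correct and follows essentially the same route as the paper: concavity and non-negativity are quoted from the preceding lemmas, the limit at $0$ is obtained from $aq+\I_{[-m,m]}\seq\I_{[-m,m]}$, and the limit at infinity is obtained by shrinking the interval proportionally to $1/a$ so that the Lipschitz constants stay bounded, then applying \eqref{eq9}, simplicity, and $\tau$-upper semicontinuity. The only differences are cosmetic: the paper uses the interval $[0,1/k]$ and a sequential parameter $k$, while you use the symmetric interval $[-1/a,1/a]$ and a continuous parameter $a$.
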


\begin{proof}
By Lemma \ref{lemma1a}, we already know that $\zeta$ is concave.  Consider the sequence of functions
$$u_k(x)=\dfrac{1}{k}x^2, \quad k\in \mathbb{N}.$$
Note that $u_k+\I_{[-m,m]}\in  \LC\NN$ and $u_k+\I_{[-m,m]}\seq \I_{[-m,m]}$ as $k\rightarrow +\infty$. 

By Lemma \ref{zetapositive}, $\zeta$ is non-negative, hence
$$\limsup_{a\rightarrow 0^+}\zeta(a)=\limsup_{k\rightarrow +\infty}\dfrac{Z(u_k+\I_{[-m,m]})}{2m}\leq Z(\I_{[-m,m]})=0.$$

To establish the final property needed for $\zeta \in \conc$, consider
$$\tilde{u}_k(x)=\frac{k}{2}x^2$$
for $k\in\mathbb{N}$. Note that $\tilde{u}_k+\I_{\left[0, \frac{1}{k}\right]}$ epi-converges to $\I_{\{0\}}$ as $k\rightarrow +\infty$, and for every $k\in\mathbb{N}$, the Lipschitz constant of $\tilde{u}_k+\I_{\left[0, \frac{1}{k}\right]}$ is 1. Since $Z$ is a $\tau$-upper semicontinuous and simple valuation, we get

\begin{align}\label{l1}
\limsup_{k\rightarrow +\infty} Z(\tilde{u}_k+\I_{\left[0, \frac{1}{k}\right]})\leq Z(\I_{\{0\}})= 0.
\end{align}

By Lemma \ref{zetapositive},  \eqref{eq9}, and since $Z$ is dually epi-translation invariant,  we obtain
\begin{align}\label{l2}
Z(\tilde{u}_k+\I_{\left[0, \frac{1}{k}\right]})= \dfrac{Z(kq + \I_{[-m,m]})}{2m}\frac{1}{k}= \dfrac{\zeta(2k)}{k}.
\end{align}
Thus, by \eqref{l1} and \eqref{l2}, we conclude that 
\[
\lim_{a\to+\infty} \frac{\zeta(a)}{a} = 0,
\]
which completes the proof that $\zeta \in \conc$.
\end{proof}

In the next step, we are interested in showing that $\zeta$ uniquely determines a valuation $Z$ that is simple, $\tau$-upper semicontinuous, translation and dually epi-translation invariant, and vanishes on indicator functions of closed intervals.



A function $u\in \LC\NN$ is called \textit{piecewise linear-quadratic} if $\dom u$ can be expressed as the union of finitely many intervals $J_i, i=1,\dots, l$, such that the restriction of $u$ to $J_i$ is either a quadratic or an affine function. The set of piecewise linear-quadratic  functions will be denoted by $P\LQ(\mathbb{R})$. Note that piecewise affine functions  belong to $\PA(\mathbb{R})$, and since  $\PA(\mathbb{R})$ is dense in $\LC\NN$ (see Lemma \ref{piecewise}),  every $u\in \LC\NN$ can be approximated by elements of $P\LQ(\mathbb{R})$.  The $\tau$-upper semicontinuity of $Z$ implies that for every sequence $u_k\in P\LQ(\mathbb{R})$ such that $u_k\seq u$,
\begin{align}\label{eq12}
Z(u)\geq \limsup_{k\rightarrow +\infty} Z(u_k).
\end{align}
We will prove that for every $u\in \LC\NN$, there exists a sequence $u_k\in P\LQ(\mathbb{R})$ such that  equality holds in \eqref{eq12}, i.e.,
\begin{align}\label{newe}
Z(u)=\sup\left\{\limsup_{k\rightarrow +\infty }Z(u_k)\mid \ u_k\in P\LQ(\mathbb{R}), u_k\seq u\right\}.
\end{align}
By proving this result, we will show that $Z$ is uniquely determined by $\zeta$. 
\medskip


We call a closed triangle $T=T(x,y)$ a \textit{support triangle} of a convex function $u$ with  endpoints $(x,u(x))$ and $(y,u(y))$, where  $x,y\in\mathbb{R} $, if  $T$ is bounded by support lines (i.e., 1-dimensional support hyperplanes) to $u$ at $x$ and $y$ and the chord connecting $(x,u(x))$ and $(y,u(y))$.
Using suitable support triangles of $u\in \LC\NN$, we will construct a function $v\in P\LQ(\mathbb{R})$ such that
\begin{align*}
    Z(u)\leq Z(v)+\rho V_1(\dom u),
\end{align*}
where $\rho>0$ is a given constant. Specifically, we will show that for the set $N$ of normal points, that is, 
$$N=\{x\in \dom u\mid \ \ u \mbox{ is twice differentiable at }x\},$$ 
there exists  a suitable Vitali class defined using the support triangles of $u$.  

Since Z is a
translation invariant valuation, we may assume without loss of generality that 
$$\dom u=[-m,m].$$ 
Denote by  $\proj_{e_1} C$  the projection of the set $C\subset \mathbb{R}^2$ onto the subspace  spanned by the canonical vector $e_1\in\mathbb{R}^2$, and let  $|\cdot|$ denote the absolute value of a scalar. We  use the notation $[A,B]$ for the closed line segment with endpoints $A$ and $B$, and $l_{[A,B]}$ for the linear function whose graph contains the segment $[A,B]$.
\begin{lema}\label{lemma2}
Let  $x_0\in N$ be such that $u''(x_0)>0$. For every $\rho, \delta>0$,  there exist a support triangle $T=T(x,y)$ of $u$, a convex body $S_T\subset \mathbb{R}^2$, and a function  $v_T\in P\LQ(\mathbb{R})$ such that:
\begin{enumerate}
\item [(i)]$(x_0,u(x_0))\in T$ and $0< y-x <\delta$;
\item[(ii)]$S_T\subset T$ and $S_T$ is a support triangle of $v_T$;
\item [(iii)]$Z(u+\I_{[x,y]})\leq Z(v_T)+\frac{\rho}{2} (y-x)$.
\end{enumerate}
\end{lema}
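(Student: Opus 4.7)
The plan is to construct $v_T$ as a piecewise linear-quadratic function obtained by splicing a slightly adjusted Taylor quadratic of $u$ over an inner sub-interval of $[x,y]$ with the support lines of $u$ at $x$ and $y$, and then to deduce (iii) from the $\tau$-upper semicontinuity of $Z$ combined with the explicit formula $Z(aq+\I_J)=\zeta(a)V_1(J)$ established in \eqref{eq9}.

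Normalizing by translation and dual epi-translation invariance of $Z$, I may assume $x_0=0$ and $u(0)=u'(0)=0$; set $a:=u''(0)>0$. Twice differentiability at $0$ gives $u(t)=\tfrac{a}{2}t^{2}+\psi(t)$ with $\psi(t)=o(t^{2})$ as $t\to 0$. Given $\varepsilon>0$ (to be fixed later in terms of $\rho$ and $a$), choose $h\in(0,\delta/2)$ so that $|\psi(t)|\le \varepsilon t^{2}$ on $[-h,h]$, and set $[x,y]:=[-h,h]$. Let $l_{-h}$ and $l_{h}$ denote the support lines of $u$ at $-h$ and $h$; the triangle $T=T(-h,h)$ bounded by $l_{-h}$, $l_{h}$, and the chord of $u$ satisfies $(0,0)=(x_0,u(x_0))\in T$, giving (i). For $\tilde a<a$ sufficiently close to $a$ and an appropriately chosen $\beta\in\mathbb{R}$, define the convex function
\[
v_T(t)=\max\bigl(l_{-h}(t),\,\tfrac{\tilde a}{2}t^{2}+\beta,\,l_{h}(t)\bigr)+\I_{[-h,h]}(t),
\]
which coincides with $l_{-h}$ on an outer piece $[-h,-h']$, with the quadratic on an inner interval $[-h',h']$, and with $l_{h}$ on $[h',h]$, so that $v_T\in P\LQ(\mathbb{R})$. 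Take $S_T$ to be the triangle bounded by $l_{-h}$, $l_{h}$ and the chord of $v_T$ joining $(\pm h',v_T(\pm h'))$; then $S_T$ is a support triangle of $v_T$ at the endpoints $-h',h'$. Since $S_T$ shares its slanted sides with $T$ but has a shorter chord lying strictly above the chord of $u$, we obtain $S_T\subset T$, which is (ii).

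For (iii), I would build a sequence $v_{T,k}\in P\LQ(\mathbb{R})$ by subdividing $[-h',h']$ more and more finely and replacing the single quadratic piece of $v_T$ by piecewise quadratics whose curvatures tend to $a$ and whose values track those of $u$ pointwise; the uniform Lipschitz bound is respected by construction, so that $v_{T,k}\seq u+\I_{[-h,h]}$. The $\tau$-upper semicontinuity of $Z$ then yields $Z(u+\I_{[-h,h]})\ge \limsup_{k}Z(v_{T,k})$. Applying dual epi-translation invariance together with the identity \eqref{eq9} on each sub-interval, both $Z(v_T)$ and each $Z(v_{T,k})$ reduce to finite sums of the form $\sum_i \zeta(\alpha_i)V_1(J_i)$. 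Concavity of $\zeta$ (Lemma \ref{lemma1a}) together with the Taylor bound $|\psi(t)|\le \varepsilon t^{2}$ then bounds $|Z(v_{T,k})-Z(v_T)|$ by $C\varepsilon (y-x)$ uniformly in $k$, for a constant $C$ depending only on $a$. Choosing $\varepsilon<\rho/(2C)$ delivers (iii).

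The main obstacle is precisely this last step: producing a subdividing sequence in $P\LQ(\mathbb{R})$ that $\tau$-converges to $u+\I_{[-h,h]}$ (respecting the uniform Lipschitz condition (iii) of the definition of $\tau$-convergence) while ensuring the error $|Z(v_{T,k})-Z(v_T)|$ scales \emph{linearly} in $y-x$ rather than quadratically. A naive continuity argument only produces a quadratic bound, so the proof must go through the explicit evaluation \eqref{eq9} on each subinterval, using the concavity of $\zeta$ to convert the second-order Taylor error into a first-order bound on the valuation.
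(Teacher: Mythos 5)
There is a genuine gap in your argument for item (iii): the semicontinuity inequality points the wrong way. If you construct $v_{T,k}\in P\LQ(\mathbb{R})$ with $v_{T,k}\seq u+\I_{[-h,h]}$, then $\tau$-upper semicontinuity only gives $Z(u+\I_{[-h,h]})\geq \limsup_{k}Z(v_{T,k})$, i.e.\ a \emph{lower} bound on $Z(u+\I_{[-h,h]})$; combined with your estimate $|Z(v_{T,k})-Z(v_T)|\leq C\varepsilon(y-x)$ this yields $Z(u+\I_{[-h,h]})\geq Z(v_T)-C\varepsilon(y-x)$, which is the reverse of what (iii) requires. Since $Z$ is only upper semicontinuous, no sequence converging to $u+\I_{[x,y]}$ itself can ever bound $Z(u+\I_{[x,y]})$ from above, and nothing else in your construction controls this value: the explicit formula \eqref{eq9} applies to quadratics plus indicators, not to the arbitrary function $u$, which is merely twice differentiable at the single point $x_0$.

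The missing idea, and the route the paper takes, is a replication argument that places $u+\I_{[x,y]}$ \emph{inside the approximating sequence} rather than at its limit. One takes the quadratic $q_\varepsilon$ inscribed in the support triangle $T_\varepsilon$ (tangent to its two slanted sides) and uses the shear maps $\phi_i$ of \eqref{affine}, which map the graph of $q_\varepsilon$ to itself, to lay $n\approx 2m/(b^1_\varepsilon+\varepsilon-x_0)$ sheared copies of the local piece of $u$ with pairwise disjoint interiors along $q_\varepsilon+\I_{[-m,m]}$, obtaining $v_\varepsilon$ as in \eqref{eq23} with $Z(v_\varepsilon)\geq n\,Z(u+\I_{[x_0-\varepsilon,x_0+\varepsilon]})$ by simplicity and dual epi-translation invariance. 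Since $v_\varepsilon\seq q_{x_0}+\I_{[-m,m]}$ as $\varepsilon\to 0$ (by \eqref{eq20}, with uniformly bounded Lipschitz constants), upper semicontinuity applied at this \emph{known} limit gives
\begin{align*}
Z(q_{x_0}+\I_{[-m,m]})\;\geq\;\limsup_{\varepsilon\to 0}\frac{2m}{b^1_\varepsilon+\varepsilon-x_0}\,Z(u+\I_{[x_0-\varepsilon,x_0+\varepsilon]}),
\end{align*}
which is exactly the upper bound on $Z(u+\I_{[x,y]})$, linear in $y-x$, that (iii) needs; it is then compared with $Z(v_T^\varepsilon)$, computed exactly via \eqref{eq9} as in \eqref{eq5.22}, with the continuity of $\zeta\in\conc$ absorbing the Taylor error. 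Your constructions for (i) and (ii) are broadly in line with the paper's (support triangle at $x_0\pm\varepsilon$, a quadratic spliced with an affine piece), but without the replication step the key inequality (iii) is not reached.
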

\begin{proof}
First, since $u$ is a  convex function,  it is,  in particular,  twice  differentiable  almost everywhere. By Taylor expansion, $u$   can be  locally represented around $x_0$ as 
\begin{align}\label{eq19}
u(x)=u(x_0)+u'(x_0)(x-x_0)+\frac{1}{2}u''(x_0)(x-x_0)^2+o((x-x_0)^2).
\end{align}

For $\varepsilon>0$, let  $A_\varepsilon=(x_0-\varepsilon,u(x_0-\varepsilon))$ and  $B_\varepsilon=(x_0+\varepsilon,u(x_0+\varepsilon))$  be points on the graph of $u$ and let 
$$T_\varepsilon=T_\varepsilon(x_0-\varepsilon,x_0+\varepsilon)$$ 
denote the support triangle of $u$ with endpoints $A_\varepsilon$ and $B_\varepsilon$. By \eqref{eq19}, we have
$$B_\varepsilon-A_\varepsilon= \left(2\varepsilon, 2\varepsilon u'(x_0)+o(\varepsilon^2)\right).$$
Thus, for $\varepsilon>0$ sufficiently small, we have  $0<2\varepsilon<\delta$, and condition $(i)$ holds. To prove condition $(ii)$, we now consider the support lines of $u$ at $x_0-\varepsilon$ and $x_0+\varepsilon$, denoted by $H(A_\varepsilon)$ and $H(B_\varepsilon)$,  respectively, and let $C_\varepsilon=(c_\varepsilon^1,c_\varepsilon^2)$ be the point where $H(A_\varepsilon)$ and $H(B_\varepsilon)$ intersect. Without loss of generality, assume that
$$ |\proj_{e_1} (C_\varepsilon - B_\varepsilon)| = (x_0+\varepsilon)-c_\varepsilon^1\leq c_\varepsilon^1-(x_0-\varepsilon)=|\proj_{e_1} (C_\varepsilon - A_\varepsilon)|.$$  
Now, define $B^1_\varepsilon=(b^1_\varepsilon, \tilde{b}^1_\varepsilon)$ as the point on $H(B_\varepsilon)$ such that 
$$|\proj_{e_1} (C_\varepsilon - B^1_\varepsilon)|= b^1_\varepsilon-c_\varepsilon^1=c_\varepsilon^1-(x_0-\varepsilon)= |\proj_{e_1} (C_\varepsilon - A_\varepsilon)|.$$ 
Since $B^1_\varepsilon\in H(B_\varepsilon)$, the point $C_\varepsilon$ also belongs to $H(B_\varepsilon)$, and 
$$ |\proj_{e_1} (C_\varepsilon - B_\varepsilon)|=(x_0+\varepsilon)-c_\varepsilon^1\leq b^1_\varepsilon-c_\varepsilon^1=|\proj_{e_1} (C_\varepsilon - B^1_\varepsilon)|,$$
we conclude that $B_\varepsilon$ is contained in the closed line segment $[C_\varepsilon, B^1_\varepsilon]$ (see Figure \ref{q1}).

Let
$$q_\varepsilon(x)=\alpha x^2+\beta x+\gamma=\alpha (\varepsilon) x^2+\beta (\varepsilon) x+\gamma(\varepsilon)$$
be the quadratic function such that the support line $H(A_\varepsilon)$ is tangent to $q_\varepsilon$ at $A_\varepsilon$, and the support line $H(B_\varepsilon)$ is tangent to $q_\varepsilon$ at $B^1_\varepsilon$ (see   Figure \ref{q1}). Since $B^1_\varepsilon= (b^1_\varepsilon, \tilde{b}^1_\varepsilon)$ lies on the graph of $q_\varepsilon$, it follows that $\tilde{b}^1_\varepsilon= q_\varepsilon(b^1_\varepsilon)$. 

A simple calculation using \eqref{eq19} shows that as $\varepsilon\rightarrow 0$
\begin{align}\label{eq20}
q_\varepsilon(x)\to \dfrac{u''(x_0)}{2}x^2+(u'(x_0)-x_0 u''(x_0))x+(u(x_0)-x_0u'(x_0)+\dfrac{x_0^2}{2} u''(x_0))\\
\nonumber =q_{x_0}(x).
\end{align}
Since $q_\varepsilon$ is  convex, the point  $B_\varepsilon$ does not lie in the interior of epi($q_\varepsilon$), and the line segment $[B_\varepsilon,B^1_\varepsilon]$ is tangent to $q_\varepsilon$ at $B^1_\varepsilon$. Now, let $B^2_\varepsilon=(b^2_\varepsilon,q_\varepsilon(b^2_\varepsilon))$ be the second point on boundary of  epi($q_\varepsilon$) such that the segment $[B^2_\varepsilon, B_\varepsilon]$ is tangent to $q_\varepsilon$ at $B^2_\varepsilon$, and let
$$T^1_\varepsilon= \conv\{A_\varepsilon,C_\varepsilon,B^1_\varepsilon\} \quad \mbox{and} \quad T^2_\varepsilon=\conv\{A_\varepsilon,C_\varepsilon,B^2_\varepsilon\}.$$
See  Figure \ref{q1}.

\begin{figure}[H]
\centering
\includegraphics[scale=0.23]{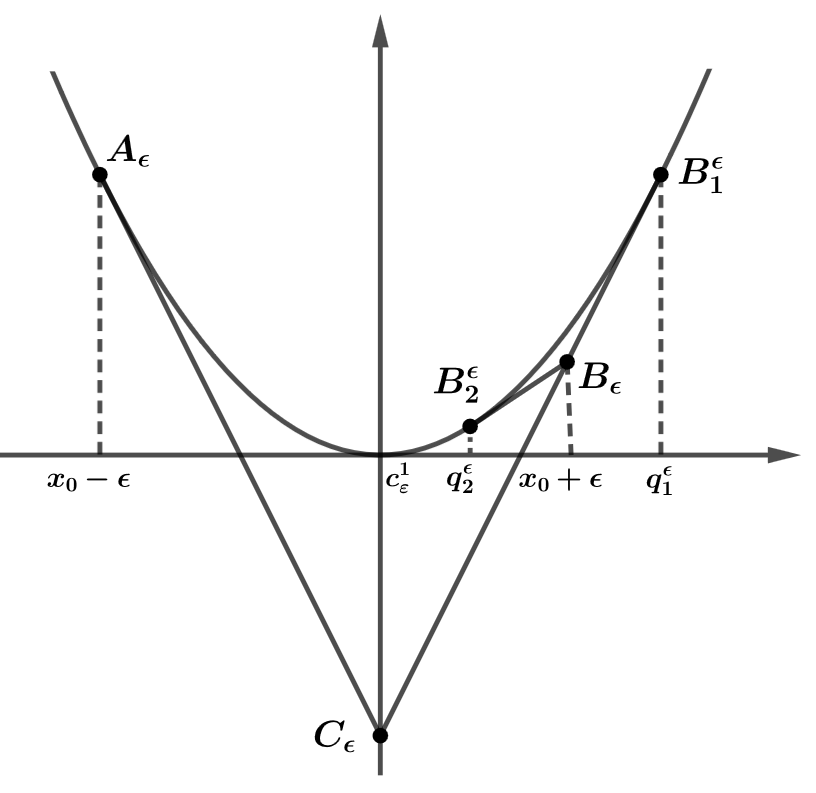}
\caption{Case $u''(x_0)>0$.}\label{q1}
\end{figure}

Now it is sufficient to define
$$ S_T^\varepsilon= (\mbox{epi}(q_\varepsilon) \cap T^2_\varepsilon)\cup \conv\{A_\varepsilon,B^2_\varepsilon,B_\varepsilon\}$$
and
\begin{align}
 v_T^\varepsilon=(q_\varepsilon+\I_{[x_0-\varepsilon,b^2_\varepsilon]})\wedge (l_{[B^2_\varepsilon, B_\varepsilon]}+\I_{[b^2_\varepsilon, x_0+\varepsilon]}).\label{eq1}
\end{align}
Therefore, $S_T^\varepsilon\subset T_\varepsilon$, $S_T^\varepsilon$ is  a support triangle of $v_T^\varepsilon$ and $v_T^\varepsilon\in P\LQ(\mathbb{R})$.
\medskip

It remains to show  item $(iii)$. Using that  $l_{[B^2_\varepsilon, B_\varepsilon]}$ and  $l_{[B_\varepsilon, B^1_\varepsilon]}$ are tangent to $q_\varepsilon$, we obtain
\begin{align*}
    b^2_\varepsilon &= (x_0+\varepsilon)-\sqrt{(x_0+\varepsilon)^2-\alpha^{-1}(u(x_0+\varepsilon)-\beta(x_0+\varepsilon)-\gamma)}\\
    b^1_\varepsilon  &= (x_0+\varepsilon)+\sqrt{(x_0+\varepsilon)^2-\alpha^{-1}(u(x_0+\varepsilon)-\beta(x_0+\varepsilon)-\gamma)},
\end{align*}
and since $u(x_0+\varepsilon)\to q_{x_0}(x_0)$ as $\varepsilon\rightarrow 0$, we get 
\begin{align*}
\lim_{\varepsilon\rightarrow 0} \frac{b^1_\varepsilon-b^2_\varepsilon}{b^1_\varepsilon+\varepsilon-x_0}=0.  
\end{align*}

By \eqref{eq1}, and using that $Z$ vanishes on indicator functions of closed intervals and is dually epi-translation invariant, we have 
$$Z(v_T^\varepsilon)= Z(q_\varepsilon+\I_{[x_0-\varepsilon,b^2_\varepsilon]}).$$
Therefore, by \eqref{eq9}, we obtain  
\begin{align*}
Z(v_T^\varepsilon)& = \dfrac{b^2_\varepsilon+\varepsilon-x_0}{2m}Z(q_\varepsilon+\I_{[-m,m]})\\
&=   \dfrac{b^1_\varepsilon+\varepsilon-x_0}{2m}\left(Z(q_\varepsilon+\I_{[-m,m]})- \dfrac{b^1_\varepsilon-b^2_\varepsilon}{b^1_\varepsilon+\varepsilon-x_0}Z(q_\varepsilon+\I_{[-m,m]})\right).
\end{align*}
For every $\eta>0$ and for  sufficiently small $\varepsilon$, this gives the inequality
\begin{align}\label{eq5.22}
\dfrac{b^1_\varepsilon+\varepsilon-x_0}{2m}\left(Z(q_\varepsilon+\I_{[-m,m]}) -\eta\right)\leq Z(v_T^\varepsilon).   
\end{align}

Next, consider the triangle  $T^1_\varepsilon$, which is a support triangle of the function
$$(u+\I_{[x_0-\varepsilon,x_0+\varepsilon]})\wedge (l_{[B_\varepsilon,B^1_\varepsilon]}+\I_{[x_0+\varepsilon,b^1_\varepsilon]}).$$
This  function belongs to $\LC\NN$, and $T^1_\varepsilon$ is also a support triangle of the quadratic function $q_\varepsilon$.  Let $y_1,\dots, y_n$ be such that 
$$|y_i|=b^1_\varepsilon+\varepsilon-x_0 \quad \mbox{and} \quad n\leq\frac{2m}{b^1_\varepsilon+\varepsilon-x_0}<n+1.$$  
Define the affine map $\phi_i(x,y):\mathbb{R}\times\mathbb{R}\rightarrow \mathbb{R}\times\mathbb{R}$ as
\begin{align}\label{affine}
 \phi_i(x,y)= (x+y_i, \alpha y_i^2+\beta y_i+2\alpha y_ix+y)= (x+y_i, y+\psi_{y_i}(x)),  
\end{align}
for every $i=1,\dots, n$, where $\psi_{y_i}(x)= \alpha y_i^2+\beta y_i+2\alpha y_ix$. Note that $\psi_{y_i}$ is an affine function, $\phi_i(x,q_\varepsilon(x))=(x+y_i, q_\varepsilon(x+y_i))$ and, since $\phi_i$ is a $C^1$ function, it maps support triangles to support triangles.
Specifically, for each $i$, $T_i=T(\tau_{y_i}^{-1}(x_0-\varepsilon), \tau_{y_i}^{-1}(b^1_\varepsilon))$ is also a support triangle of $q_\varepsilon(x)=\alpha x^2+\beta x+\gamma$ for all $i=1,\dots, n$, and the $T_i's$ have pairwise disjoint interiors. Here,  $\tau_{y_i}(x)= x+y_i$ for every $i=1,\dots, n$.

Define 
$$u_i(x+y_i)= (u+\I_{[x_0-\varepsilon,x_0+\varepsilon]})(x) \wedge (l_{[B_\varepsilon,B^1_\varepsilon]}+\I_{[x_0+\varepsilon,b^1_\varepsilon]})(x)+\psi_{y_i}(x)$$
for every $i=1,\dots, n$, and let
\begin{align}\label{eq23}
v_\varepsilon= \bigwedge_{i=1}^n u_i\wedge (q_\varepsilon+\I_{[-m,m]}).
\end{align}
By construction,  $v_\varepsilon$ is a convex function in $\LC\NN$ and

\begin{align*}
Z(v_\varepsilon)\geq  n Z(u+\I_{[x_0-\varepsilon,x_0+\varepsilon]}).
\end{align*}
Furthermore, by \eqref{eq20}, $v_\varepsilon$ is $\tau$-convergent to $q_{x_0}+\I_{[-m,m]}$ as $\varepsilon\rightarrow 0$. 
 
By $\tau$-upper semicontinuity of  $Z$, we then have
\begin{align*}
Z(q_{x_0}+\I_{[-m,m]})& \geq \limsup_{\varepsilon\rightarrow 0} Z(v_\varepsilon)\\
& \geq  \limsup_{\varepsilon\rightarrow 0}\frac{2m}{b^1_\varepsilon+\varepsilon-x_0}Z(u+\I_{[x_0-\varepsilon,x_0+\varepsilon]}).
\end{align*}

Since  the function  $\zeta(2\alpha)=\frac{1}{2m}Z(2\alpha q+\I_{[-m,m]})$ is contained in $\conc$, in particular, it is continuous and by \eqref{eq20}, we get for every $\eta>0$
\begin{align*}
Z(u+\I_{[x_0-\varepsilon,x_0+\varepsilon]})\leq \frac{b^1_\varepsilon+\varepsilon-x_0}{2m}\left(Z(q_\varepsilon+\I_{[-m,m]})+\eta\right)
\end{align*}
for sufficiently small $\varepsilon$. Furthermore, note that
\begin{align*}
b^1_\varepsilon - (x_0+\varepsilon)&\leq \frac{b^1_\varepsilon-(x_0-\varepsilon)}{2} = \varepsilon +\frac{b^1_\varepsilon - (x_0+\varepsilon)}{2}.  
\end{align*}
These last inequalities and \eqref{eq5.22}  now imply that
\begin{align*}
Z(u& +\I_{[x_0-\varepsilon,x_0 +\varepsilon]}) \leq Z(v_T^\varepsilon)+\frac{b^1_\varepsilon+\varepsilon-x_0}{2m}2\eta\\
&\leq Z(v_T^\varepsilon)+\dfrac{2\varepsilon+ \sqrt{(x_0+\varepsilon)^2-\alpha^{-1}(u(x_0+\varepsilon)-\beta(x_0+\varepsilon)-\gamma)}}{2m}2\eta\\
&\leq Z(v_T^\varepsilon)+\dfrac{4\varepsilon}{2m}2\eta
\end{align*}
for  sufficiently small $\varepsilon$.  In the last inequality,  we used the  simple estimate  $$\sqrt{(x_0+\varepsilon)^2-\alpha^{-1}(u(x_0+\varepsilon)-\beta(x_0+\varepsilon)-\gamma)}\leq 2\varepsilon.$$ 
\noindent Finally, setting $\eta=\frac{\rho m}{4}$  shows that item $(iii)$ holds for  sufficiently small $\varepsilon>0$.
\end{proof}

In the case $u''(x_0)=0$ for $x_0\in N$, a similar construction applies, the main difference is that the approximating function $v_T$ is affine on a closed interval.

\begin{lema}\label{lemma2a}
Let  $x_0\in N$ be such that $u''(x_0)=0$. For every $\rho, \delta>0$,  there exist a support triangle $T=T(x,y)$ of $u$, a convex body $S_T\subset \mathbb{R}^2$, and a function  $v_T\in P\LQ(\mathbb{R})$ such that:
\begin{enumerate}
\item [(i)]$(x_0,u(x_0))\in T$ and $0< y-x <\delta$;
\item[(ii)]$S_T\subset T$ and $S_T$ is a support triangle of $v_T$;
\item [(iii)]$Z(u+\I_{[x,y]})\leq Z(v_T)+\frac{\rho}{2} (y-x)$.
\end{enumerate}
\end{lema}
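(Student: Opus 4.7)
The plan is to mirror the proof of Lemma \ref{lemma2}, exploiting the degeneracy $u''(x_0)=0$ in two ways: first, to take $v_T$ genuinely affine on a closed interval (so that $Z(v_T)=0$ for free); and second, so that the limiting quadratic $q_{x_0}$ arising in the tiling argument collapses to an affine function, forcing the dominating term in the estimate to vanish.

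By Taylor expansion, since $u''(x_0)=0$,
\[
u(x)=u(x_0)+u'(x_0)(x-x_0)+o((x-x_0)^2)
\]
near $x_0$. For $\varepsilon>0$ with $2\varepsilon<\delta$, I set $A_\varepsilon=(x_0-\varepsilon,u(x_0-\varepsilon))$, $B_\varepsilon=(x_0+\varepsilon,u(x_0+\varepsilon))$, and take $T_\varepsilon=T(x_0-\varepsilon,x_0+\varepsilon)$ to be the support triangle of $u$ bounded by the support lines at these endpoints and the chord $[A_\varepsilon,B_\varepsilon]$, giving $(i)$. For the approximant, I take
\[
v_T^\varepsilon=l_{[A_\varepsilon,B_\varepsilon]}+\I_{[x_0-\varepsilon,x_0+\varepsilon]}\in P\LQ(\mathbb{R}),
\]
that is, the chord restricted to the interval, and choose $S_T^\varepsilon$ to be an (appropriately interpreted, possibly degenerate) support triangle of $v_T^\varepsilon$ contained in $T_\varepsilon$, yielding $(ii)$. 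Since $Z$ vanishes on indicator functions of closed intervals and is dually epi-translation invariant,
\[
Z(v_T^\varepsilon)=Z(l_{[A_\varepsilon,B_\varepsilon]}+\I_{[x_0-\varepsilon,x_0+\varepsilon]})=Z(\I_{[x_0-\varepsilon,x_0+\varepsilon]})=0.
\]

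It remains to establish $(iii)$, which now reduces to $Z(u+\I_{[x_0-\varepsilon,x_0+\varepsilon]})\le\rho\varepsilon$ for sufficiently small $\varepsilon$. I repeat the tiling construction of Lemma \ref{lemma2} verbatim: build the quadratic $q_\varepsilon(x)=\alpha(\varepsilon)x^2+\beta(\varepsilon)x+\gamma(\varepsilon)$ tangent to the support lines at $A_\varepsilon$ and $B_\varepsilon^1$, form the translations $\phi_i$ from \eqref{affine}, and set $v_\varepsilon=\bigwedge_{i=1}^n u_i\wedge(q_\varepsilon+\I_{[-m,m]})$ as in \eqref{eq23}, where $n\sim m/\varepsilon$. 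The same argument gives $nZ(u+\I_{[x_0-\varepsilon,x_0+\varepsilon]})\le Z(v_\varepsilon)$ and $v_\varepsilon\seq q_{x_0}+\I_{[-m,m]}$. Crucially, \eqref{eq20} now shows $\alpha(\varepsilon)\to u''(x_0)/2=0$, so $q_{x_0}$ reduces to the tangent line of $u$ at $x_0$, namely $q_{x_0}(x)=u'(x_0)x+u(x_0)-x_0u'(x_0)$. The hypotheses on $Z$ then force
\[
Z(q_{x_0}+\I_{[-m,m]})=Z(\I_{[-m,m]})=0,
\]
and $\tau$-upper semicontinuity yields $\limsup_{\varepsilon\to 0}Z(v_\varepsilon)\le 0$. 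Dividing by $n$ and using $n\ge (2m)/(b^1_\varepsilon+\varepsilon-x_0)-1\sim m/\varepsilon$ provides the required bound by choosing $\varepsilon$ sufficiently small.

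The main technical point is ensuring that the geometric construction of Lemma \ref{lemma2} remains well-defined as $\alpha(\varepsilon)\to 0$: the tangency conditions defining $b^1_\varepsilon,b^2_\varepsilon$ and the positions of $B^1_\varepsilon,B^2_\varepsilon$ require $\alpha(\varepsilon)>0$. If $u$ is affine on some neighbourhood of $x_0$, then $u+\I_{[x_0-\varepsilon,x_0+\varepsilon]}$ is itself of the form $l+\I_J$ and $(iii)$ is immediate from the hypotheses on $Z$; otherwise $\alpha(\varepsilon)>0$ along a sequence $\varepsilon_k\to 0$ and the construction goes through with no essential change from Lemma \ref{lemma2}.
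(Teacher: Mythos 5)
Your proof is correct, but it takes a genuinely different route from the paper at the key technical step. Both arguments use the chord $l_{[A_\varepsilon,B_\varepsilon]}+\I$ as the approximant $v_T$ (so that $Z(v_T)=0$ and (iii) reduces to a bound on $Z(u+\I_{[x,y]})$), and both obtain that bound by the tiling-plus-upper-semicontinuity mechanism of Lemma \ref{lemma2}. The difference is which quadratic gets tiled. You push the construction of Lemma \ref{lemma2} into the degenerate limit: the tiling quadratic $q_\varepsilon$ depends on $\varepsilon$, its leading coefficient $\alpha(\varepsilon)$ tends to $u''(x_0)/2=0$, and the upper semicontinuity is applied at the affine limit $q_{x_0}+\I_{[-m,m]}$, whose $Z$-value is exactly $0$; the quantitative bookkeeping with $n\sim m/\varepsilon$ then closes the estimate, as you indicate. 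This requires two things you correctly address or that hold: the construction of $q_\varepsilon$ breaks down when $u$ is affine on $[x_0-\varepsilon,x_0+\varepsilon]$ (you dispose of that case directly, since then $u+\I_{[x_0-\varepsilon,x_0+\varepsilon]}=l+\I_J$ and $Z$ vanishes on it; in fact, if $u$ is not affine near $x_0$ the one-sided derivatives at $x_0\pm\varepsilon$ differ for every small $\varepsilon$, so $\alpha(\varepsilon)>0$ for all small $\varepsilon$, not just along a sequence), and the validity of \eqref{eq20} when $u''(x_0)=0$ (it does hold: the slopes of the support lines at $x_0\pm\varepsilon$ differ by $o(\varepsilon)$ while the tile width is at least $2\varepsilon$, so $\alpha(\varepsilon)\to0$ and $q_\varepsilon$ converges uniformly on $[-m,m]$ to the tangent line). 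The paper instead decouples the two limits: it first fixes an auxiliary quadratic $\tilde q\ge u$ near $x_0$ with a \emph{fixed} small leading coefficient $\alpha>0$, chosen via a separate upper-semicontinuity step in $\alpha$ so that $Z(\tilde q+\I_{[-m,m]})\le \rho m/4$ (see \eqref{eq25}), and then tiles with this fixed, strictly convex $\tilde q$ as $\varepsilon\to0$, using the one-sided interval $[x_0,x_0+\varepsilon]$ and the clean relation $b^1_\varepsilon=x_0+2\varepsilon$. What the paper's two-parameter argument buys is that the geometric construction never degenerates, no matter how $u$ behaves locally (no case distinction, no $\varepsilon$-dependent quadratic to control); what your single-limit argument buys is directness, since the limit functional value is exactly $0$ rather than merely small. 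One small point: rather than invoking a ``possibly degenerate'' support triangle for the chord function, you can simply take $S_T=T$, since the support lines of $u$ at the endpoints are also support lines of $l_{[A_\varepsilon,B_\varepsilon]}+\I_{[x_0-\varepsilon,x_0+\varepsilon]}$; this is essentially what the paper does by setting $S_T^\varepsilon=T_\varepsilon$.
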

\begin{proof}
Let $T_\varepsilon$ be the support triangle of $u$ with endpoints $A=(x_0,u(x_0))$ and $B_\varepsilon=(x_0+\varepsilon,u(x_0+\varepsilon))$. Let $S_T^\varepsilon=T_\varepsilon$. Then $(i)$ holds for $\varepsilon$ sufficiently small. For every $\alpha>0$, define the quadratic function
$$\tilde{q}(x)=\alpha x^2+(u'(x_0)-2\alpha x_0)x+(u(x_0)+\alpha x_0^2-u'(x_0)x_0)$$
which satisfies the following properties: $A$ is a point of the graph of $\tilde{q}$ and  the epigraph  of $\tilde{q}$ is locally contained in the epigraph of $u$ near $x_0$.

Since $Z$ is a $\tau$-upper semicontinuous, simple, and non-negative  valuation that vanishes on indicator functions of closed intervals, we have
$$\limsup_{\alpha \rightarrow 0} Z(\tilde{q}+\I_{[-m,m]})= 0.$$
Therefore,  for $\alpha$ sufficiently small
\begin{align}\label{eq25}
Z(\tilde{q}+\I_{[-m,m]})\leq \frac{\rho m}{4}.
\end{align}

Next, let $C_\varepsilon$ be the point on the support line to $u$ at $x_0$ such that the first coordinate of  $C_\varepsilon$ is $x_0+\varepsilon$, and let $B^1_\varepsilon=(b^1_\varepsilon,\tilde{q}(b^1_\varepsilon))$ be the point on the graph of $\tilde{q}$ where the line segment $l_{[C_\varepsilon, C^1_\varepsilon]}$ is tangent to the graph of $\tilde{q}$.
\begin{figure}
\centering
\includegraphics[scale=0.18]{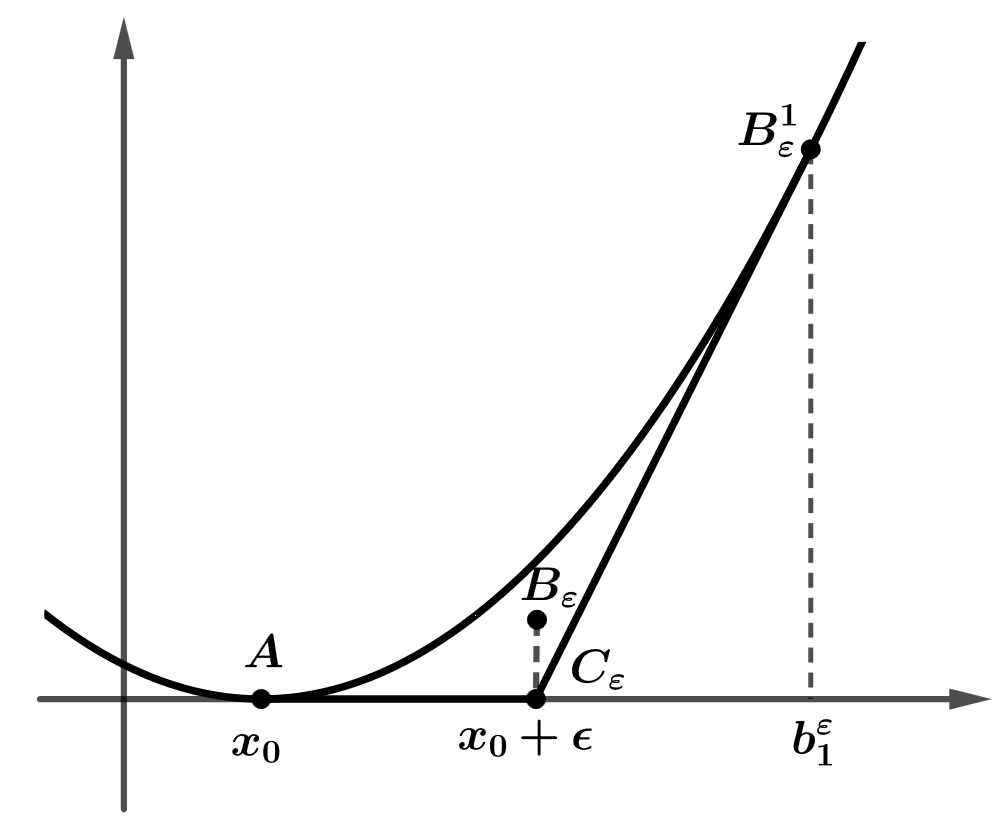}
\caption{ Case $u''(x_0)=0$.  }
\end{figure}

Then the triangle 
$$T^1_\varepsilon=\conv\{A,C_\varepsilon,B^1_\varepsilon\}$$
is a support triangle of $\tilde{q}$. Since epi($\tilde{q}$) is locally contained in epi($u$), the support line of $u$ at $B_\varepsilon$ does not intersect $\tilde{q}$ for  sufficiently  small $\varepsilon> 0$. Therefore, 
$$(u+\I_{[x_0,x_0+\varepsilon]})\wedge (l_{[B_\varepsilon,B^1_\varepsilon]}+\I_{[x_0+\varepsilon,b^1_\varepsilon]})$$
is a convex function in $\LC\NN$, and $T^1_\varepsilon$ is also a support triangle of this function. Using a similar construction as in  \eqref{eq23}, define $v_\varepsilon$ by
\begin{align*}
v_\varepsilon= \bigwedge_{i=1}^n u_i\wedge (\tilde{q}+\I_{[-m,m]}),
\end{align*}
where  $n\leq \frac{2m}{b^1_\varepsilon-x_0}<n+1$, and  $\psi_{y_1},\dots, \psi_{y_n}$ are the  affine functions associated with the $y_1,\dots, y_n$ and $\tilde{q}+\I_{[-m,m]}$, respectively, (see \eqref{affine})
and 
\begin{align*}
u_{i}(x+y_i)=(u+\I_{[x_0,x_0+\varepsilon]})(x) \wedge (l_{[B_\varepsilon,B^1_\varepsilon]}+\I_{[x_0+\varepsilon,b^1_\varepsilon]})(x)+\psi_{y_i}(x).    
\end{align*}

Then, as $\varepsilon\rightarrow 0$, $v_\varepsilon$ is $\tau$-convergent to  $\tilde{q}+\I_{[-m,m]}$,  and  we have 
\begin{align*}
Z(v_\varepsilon)& \geq  nZ(u+\I_{[x_0,x_0+\varepsilon]}).
\end{align*} 
Since $Z$ is $\tau$-upper semicontinuous,  for every $\eta>0$, we obtain the following inequality for $\varepsilon$ sufficiently small,
\begin{align}\label{eq.}
\frac{2m}{b^1_\varepsilon-x_0}Z(u+\I_{[x_0,x_0+\varepsilon]})\leq Z(\tilde{q}+\I_{[-m,m]}) +\eta.   
\end{align}

Since $C_\varepsilon$ lies on the support line  to $\tilde{q}$ containing $A$, and the support line containing $B^1_\varepsilon$, we have that $b^1_\varepsilon=x_0+2\varepsilon$. Thus, replacing $b^1_\varepsilon$ into inequality \eqref{eq.}, we obtain
\begin{align}\label{eta}
Z(u+\I_{[x_0, x_0+\varepsilon]})\leq \left(\frac{\eta}{m}+\frac{1}{m}Z(\tilde{q}+\I_{[-m,m]})\right)\varepsilon .    
\end{align}
By \eqref{eq25}, it follows that
\begin{align*}
Z(u+\I_{[x_0, x_0+\varepsilon]})&\leq  \left(\frac{\eta}{m} +\frac{\rho}{4}\right) \varepsilon.
\end{align*}
Taking $\eta=\dfrac{\rho m}{4}$, we get
$$Z(u+\I_{[x_0, x_0+\varepsilon]})\leq    \frac{\rho}{2} \varepsilon.$$

Finally, since
$$v_T^\varepsilon= l_{[A, B_\varepsilon]}+\I_{[x_0,x_0+\varepsilon]}$$
is an  affine function on $[x_0,x_0+\varepsilon]$,  we conclude that conditions $(ii)$ and $(iii)$ hold for sufficiently small $\varepsilon$.
\end{proof}

Further, we need the following result.

\begin{lema}\label{singular}
There is a constant $c_{L_u}$ such that
\begin{align*}
    Z(u+\I_J)\leq c_{L_u} V_1(J) 
\end{align*}
for every closed interval $J\subseteq \dom u=[-m,m]$,  where $c_{L_u}$ depends only  on  the Lipschitz constant $L_u$ of $u$   and $m>0$.
\end{lema}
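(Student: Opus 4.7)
The strategy is a Vitali covering argument built from the support-triangle constructions of Lemmas \ref{lemma2} and \ref{lemma2a}. First, by the translation invariance of $Z$, I may assume $\dom u = [-m, m]$. Since $Z$ is simple, the valuation identity applied to $a < b < c$ in $[-m,m]$ gives
\[
Z(u + \I_{[a,c]}) + Z(u + \I_{\{b\}}) = Z(u + \I_{[a,b]}) + Z(u + \I_{[b,c]}),
\]
and $Z(u + \I_{\{b\}}) = 0$ by simplicity, so $J \mapsto Z(u + \I_J)$ is a non-negative (by \eqref{znon_negative}) additive set function on closed subintervals of $\dom u$.

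Next, fix $\rho > 0$. By Aleksandrov's theorem, the set $N \subset [-m, m]$ of points where $u$ is twice differentiable has full Lebesgue measure. For each $x_0 \in N \cap J$, Lemma \ref{lemma2} (if $u''(x_0) > 0$) or Lemma \ref{lemma2a} (if $u''(x_0) = 0$) yields a support triangle $T(x, y)$ of arbitrarily small width $y - x < \delta$ containing $(x_0, u(x_0))$, together with $v_T \in P\LQ(\mathbb{R})$ satisfying
\[
Z(u + \I_{[x, y]}) \leq Z(v_T) + \frac{\rho}{2}(y - x).
\]
By \eqref{eq9}, $Z(v_T) \leq (y - x)\zeta(2\alpha)$, where $2\alpha$ is the leading coefficient of the quadratic piece of $v_T$ (with $Z(v_T) = 0$ in the affine case $u''(x_0)=0$). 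Since $u$ is $L_u$-Lipschitz, the slopes of the supporting lines used in constructing $T(x,y)$ lie in $[-L_u, L_u]$, and the relation $2\alpha(y-x) \leq 2L_u$ controls the coefficient, giving $2\alpha \leq 2L_u/(y-x)$.

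The base intervals $\{[x, y]\}$ form a Vitali class for $N \cap J$. By Theorem \ref{VCT}, for every $\eta > 0$ I can extract finitely many disjoint $[x_i, y_i]$ covering $N \cap J$ up to a set of measure $< \eta$, and since $V_1(J \setminus N) = 0$, the complementary gaps in $J$ have total measure $< \eta$. By the additivity above,
\[
Z(u + \I_J) = \sum_i Z(u + \I_{[x_i, y_i]}) + \sum_j Z(u + \I_{G_j}),
\]
with $\{G_j\}$ the gap intervals. Each triangle contribution is bounded by $(y_i - x_i)[\zeta(2L_u/(y_i - x_i)) + \rho/2]$. By insisting (at the price of an enlarged gap measure $\eta$, absorbed in the iteration below) that the extracted Vitali subfamily consists of intervals whose widths are uniformly bounded below by some $h_0 = h_0(L_u, m)$, the monotonicity of $t \mapsto t\,\zeta(2L_u/t)$ (the perspective of the concave $\zeta$, cf.\ Lemma \ref{lemma1a}) gives $\zeta(2L_u/(y_i - x_i)) \leq \zeta(2L_u/h_0) =: c_{L_u}$, so $\sum_i Z(u + \I_{[x_i, y_i]}) \leq (c_{L_u} + \rho/2)\,V_1(J)$. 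The gap terms are handled by iterating the same decomposition inside each $G_j$, using the uniform density bound just obtained together with additivity to absorb them into the main sum as $\eta \to 0$. Taking $\rho \to 0$ concludes $Z(u + \I_J) \leq c_{L_u}\, V_1(J)$.

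The main obstacle is precisely the control of the leading quadratic coefficients $\alpha_i$ as the support-triangle widths shrink: because $\alpha_i$ may blow up as $y_i - x_i \to 0$, one must carefully select a Vitali subfamily consisting of intervals of comparable (bounded-below) widths while still covering $J$ up to arbitrarily small measure, and then iteratively process the leftover gaps. This trade-off between the width lower bound $h_0$ and the residual gap measure $\eta$ is what converts the $L_u$-Lipschitz assumption on $u$ into a uniform density estimate for the set function $Z(u + \I_\cdot)$.
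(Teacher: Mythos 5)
Your per-triangle estimate is essentially correct: by \eqref{eq9} and the monotonicity of $\zeta$ (it is nonnegative and concave, hence nondecreasing --- this, rather than monotonicity of $t\mapsto t\,\zeta(2L_u/t)$, is what you actually use), each admissible triangle gives $Z(u+\I_{[x,y]})\le (y-x)\left(\zeta\!\left(2L_u/(y-x)\right)+\rho/2\right)$. The fatal step is the claim that you may extract a Vitali subfamily of triangles whose widths are bounded below by some $h_0=h_0(L_u,m)$. Lemmas~\ref{lemma2} and~\ref{lemma2a} only guarantee triangles of width smaller than any prescribed $\delta$; they provide no lower bound, and none can exist: across a support triangle based at $x_0\in N$ the slope of the associated quadratic piece varies by roughly $u''(x_0)(y-x)$ while being confined to $[-L_u,L_u]$, so any admissible triangle at $x_0$ has width at most about $2L_u/u''(x_0)$. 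Since $u''$ is only integrable and may be unbounded for an $L_u$-Lipschitz convex $u$ (e.g.\ $u(x)=\tfrac23x^{3/2}$ near $0$, $L_u=1$, $u''(x)=\tfrac12x^{-1/2}$), the set where $u''>2L_u/h_0$ can have positive measure; your family of ``wide'' triangles is therefore not a Vitali class for $N\cap J$, and the covering misses exactly the high-curvature part of $J$. The proposed repair --- iterating the decomposition inside the leftover gaps $G_j$ --- is circular: an a priori bound on $Z(u+\I_{G_j})$ is precisely the statement of Lemma~\ref{singular}, which is why the paper establishes this lemma before, and independently of, the Vitali argument of Proposition~\ref{prop1}, where it is what controls the gaps. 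Moreover, with $h_0$ fixed the high-curvature set is never absorbed by the iteration, while letting $h_0\to0$ destroys the constant $\zeta(2L_u/h_0)$.

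The paper's proof takes a genuinely different route that uses neither a covering nor twice differentiability: at an arbitrary point $x_0\in\dom u$ it introduces the single comparison quadratic $L_u\,q_{x_0}$ with second derivative $2L_u$, matching $u(x_0)$ and $u_+'(x_0)$, and reruns the tiling and upper-semicontinuity construction from the proof of Lemma~\ref{lemma2a} with $L_u\,q_{x_0}$ in place of $\tilde q$; this yields $Z(u+\I_{[x_0,x_0+\varepsilon]})\le\frac1m\left(1+Z(L_u\,q_{x_0}+\I_{[-m,m]})\right)\varepsilon$ for all sufficiently small $\varepsilon$, with a constant depending only on $L_u$ and $m$ by dual epi-translation invariance, and then $J$ is split into finitely many such intervals. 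In other words, the density constant must come directly from the Lipschitz constant, via a curvature-$2L_u$ comparison object available at every point, and not from the curvature-adapted triangles of Lemmas~\ref{lemma2} and~\ref{lemma2a}, whose quality degenerates precisely where $u''$ is large. The blow-up of the quadratic coefficients that you identify is the real obstruction, and the selection-plus-iteration scheme you propose does not overcome it; note also that this same high-curvature regime is where the paper's key containment claim (that the epigraph of $L_u\,q_{x_0}$ is locally contained in that of $u$) requires care, so any complete argument must address it explicitly.
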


\begin{proof}
Let  $L_u$ denote  the Lipschitz constant of $u$. There exists a quadratic function
$$q_{x_0}(x)= x^2+\beta(x_0)x+\gamma(x_0)$$
such that $L_u \,q_{x_0}(x_0)= u(x_0)$ and $L_u\, q_{x_0}'(x_0)= u_+'(x_0)$, where $u_+'(x_0)$ denotes the right-hand derivative of $u$ at $x_0$. This implies that the epigraph of $L_u\, q_{x_0}$ is locally contained in the epigraph of $ u$. Let $T_\varepsilon$ be the support triangle of $u$ with endpoints $A=(x_0,u(x_0))$ and $B_\varepsilon(x_0+\varepsilon, u(x_0+\varepsilon))$,  where $\varepsilon>0$ is sufficiently small. As in  Lemma \ref{lemma2a}, define $B_\varepsilon^1=(b_\varepsilon^1, q_{x_0}(b_\varepsilon^1))$ and 
\begin{align*}
v_\varepsilon= \bigwedge_{i=1}^n u_i\wedge (L_u\, q_{x_0}+\I_{[-m,m]})
\end{align*}
where  $n\leq \frac{2m}{b^1_\varepsilon-x_0}<n+1, \psi_{y_1},\dots, \psi_{y_n}$ are the  affine functions associated with $y_1,\dots, y_n$ and $L_u\, q_{x_0}+\I_{[-m,m]}$, respectively, and  
\begin{align*}
u_{i}(x+y_i)=(u+\I_{[x_0,x_0+\varepsilon]})(x) \wedge (l_{[B_\varepsilon,B^1_\varepsilon]}+\I_{[x_0+\varepsilon,b^1_\varepsilon]})(x)+\psi_{y_i}(x).    
\end{align*}
Then $v_\varepsilon$ is $\tau$-convergent to  $L_u\, q_{x_0}+\I_{[-m,m]}$ as $\varepsilon\rightarrow 0$. Moreover, for $\eta=1$, we obtain, as in~\eqref{eta}, the inequality 
\begin{align}
 Z(u+\I_{[x_0, x_0+\varepsilon]}) &\leq   \left(\frac{1}{m}+\frac{Z(L_u\, q_{x_0}+\I_{[-m,m]})}{m}\right)\varepsilon\label{uniforme}
\end{align}
for  sufficiently small $\varepsilon$. Note that $Z(L_u\, q_{x_0}+\I_{[-m,m]})$  depends only on $L_u$ and $m$, since $Z$ is dually epi-translation invariant. Therefore, we can partition $J$ into finitely many intervals  on which \eqref{uniforme} holds.
\end{proof}


Combining Lemma~\ref{lemma2}, Lemma~\ref{lemma2a}, and Lemma~\ref{singular}, we are now able to prove \eqref{newe}, which will be used in the proof of Proposition~\ref{prop2}.

\begin{prop}\label{prop1}
If $Z:\LC\NN\to \mathbb{R}$ is a simple, $\tau$-upper semicontinuous, and translation  invariant valuation that vanishes on indicator functions of closed intervals, then
\begin{align*}
Z(u)=\sup\left\{\limsup_{n\rightarrow +\infty }Z(v_n)\mid \ v_n\in P\LQ(\mathbb{R}), v_n\rightarrow u\right\}.
\end{align*}
\end{prop}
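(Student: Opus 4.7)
The direction $Z(u) \ge \sup\{\ldots\}$ is immediate from the $\tau$-upper semicontinuity of $Z$, as already recorded in~\eqref{eq12}. The content of the proposition lies in the reverse inequality $Z(u) \le \sup\{\ldots\}$, which I plan to prove by a Vitali covering argument in the spirit of Ludwig's proof in~\cite{71}.

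After translating so that $\dom u = [-m,m]$, I fix $\rho, \varepsilon > 0$. The set $N$ of normal points has full Lebesgue measure in $[-m,m]$, and Lemmas~\ref{lemma2} and~\ref{lemma2a} show that the family of intervals $[x,y] \ni x_0$ supplied by those lemmas (for $x_0 \in N$ and arbitrarily small $\delta > 0$) forms a Vitali class for $N$. Theorem~\ref{VCT} then yields pairwise disjoint intervals $I_i = [x_i, y_i]$, $i = 1, \ldots, k$, ordered so that $y_i < x_{i+1}$, satisfying $V_1\bigl([-m,m] \setminus \bigcup_i I_i\bigr) < \varepsilon$, together with the complementary gap intervals $G_0, \ldots, G_k$ and approximants $v_{T_i} \in P\LQ(\mathbb{R})$.

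Simplicity of $Z$ produces additivity along the adjacent-interval partition $G_0, I_1, G_1, \ldots, I_k, G_k$ of $[-m,m]$: for closed intervals $I, J$ meeting only in an endpoint, $(u + \I_I) \vee (u + \I_J) = u + \I_{I \cap J}$ has singleton domain and therefore vanishes under $Z$, so the valuation identity yields $Z(u + \I_{I \cup J}) = Z(u + \I_I) + Z(u + \I_J)$. Iterating, and invoking the upper bounds from Lemmas~\ref{lemma2}, \ref{lemma2a}, \ref{singular},
\[
Z(u) \;=\; \sum_{i=1}^{k} Z(u + \I_{I_i}) \;+\; \sum_{j=0}^{k} Z(u + \I_{G_j}) \;\le\; \sum_{i=1}^{k} Z(v_{T_i}) \;+\; \rho m \;+\; c_{L_u}\varepsilon.
\]

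Next, I glue the $v_{T_i}$'s into a single function $v^{\rho,\varepsilon}$ by setting $v^{\rho,\varepsilon} = v_{T_i}$ on $I_i$ and taking the affine chord of $u$ on each gap $G_j$; since every $v_{T_i}$ agrees with $u$ at its endpoints, the gluing is continuous. The containment $S_{T_i} \subset T_i$ pins the one-sided slopes of $v_{T_i}$ at $x_i$ and $y_i$ between the one-sided slopes of $u$ there, and convexity of $u$ then ensures $v^{\rho,\varepsilon}$ is convex and lies in $P\LQ(\mathbb{R}) \cap \LC\NN$. The same additivity argument, combined with $Z(l + \I_{G_j}) = Z(\I_{G_j}) = 0$ (dual epi-translation invariance plus vanishing on indicator functions), gives $Z(v^{\rho,\varepsilon}) = \sum_i Z(v_{T_i})$. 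Choosing $\rho_n, \varepsilon_n \to 0$ with Vitali diameter bound $\delta_n \to 0$ and setting $v_n := v^{\rho_n,\varepsilon_n}$, epi-convergence $v_n \to u$ follows from the shrinking interval diameters and the agreement of $v_n$ with $u$ on an increasingly dense set of endpoints, while the uniform Lipschitz bound $L_{v_n} \le L_u$ is inherited from $u$ since every slope appearing in $v_n$ is controlled by a support-line slope of $u$. Hence $v_n \seq u$ and $\limsup_n Z(v_n) \ge Z(u)$, which yields the proposition. The main technical obstacle is the rigorous verification of convexity and of the uniform Lipschitz bound for the glued functions $v^{\rho,\varepsilon}$; this boils down to a careful quantitative comparison between the quadratic-piece slopes in each $v_{T_i}$ and the local support slopes of $u$ at the interval endpoints, using that $S_{T_i}$ is contained in the support triangle $T_i$ of $u$.
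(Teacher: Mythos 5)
Your proposal is correct and follows essentially the same route as the paper's proof: the Vitali covering by support triangles from Lemmas~\ref{lemma2} and~\ref{lemma2a}, additivity of $Z$ over adjacent intervals via simplicity, control of the leftover gaps by Lemma~\ref{singular}, and gluing the local approximants into a single function in $P\LQ(\mathbb{R})$ to conclude as $\rho,\varepsilon\to 0$. The points you flag as technical obstacles (convexity of the glued function, the uniform Lipschitz bound, and the $\tau$-convergence of the approximants) are exactly the details the paper treats implicitly, and your sketched slope comparisons via $S_{T_i}\subset T_i$ resolve them correctly.
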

\begin{proof}
Since $u\in\LC\NN$, the set $N\subseteq \dom u$ of points where $u$ is twice differentiable is such that
$$V_1(N)= V_1(\dom u).$$

\noindent  By Lemma \ref{lemma2} and Lemma \ref{lemma2a}, the sets 
$$\{x\in N\mid \ (x,u(x))\in T\},$$  
where $T$ is a support triangle of $u$ satisfying the conditions of Lemma \ref{lemma2} or Lemma \ref{lemma2a}, form a Vitali class for $N$,  and this remains true when we restrict to sets $T$   such that $V_1(\proj_{e_1}T)\leq \delta$ for some small  $\delta>0$. Let $\eta>0$ such that  
$$\eta\leq \delta \quad \text{and} \quad \eta\leq\frac{\rho}{2 c_{L_u}}V_1(\dom u),$$ 
where $c_{L_u}$ is given by Lemma \ref{singular}.  Then,  by Vitali's Theorem \ref{VCT}, we can choose  support triangles $T_1,\dots,T_n$  such that
\begin{align*}
V_1(\dom u)=V_1(N)\leq \sum_{i=1}^n V_1(P_{e_1} T_i)+\eta,    
\end{align*}
where the closed intervals $P_{e_1} T_1, \dots, P_{e_1} T_n$ have pairwise disjoint interiors. We also  choose closed intervals $J_1,\dots, J_k$ such that the sets  $P_{e_1} T_1, \dots, P_{e_1} T_n, J_1,\dots, J_k$  have pairwise disjoint interiors, and such that   $\dom u$ can be decomposed as 
$$P_{e_1} T_1\cup \cdots\cup P_{e_1} T_n\cup  J_1\cup \cdots \cup J_k.$$

Define
$$v_T=v_{T_1}\wedge\cdots \wedge v_{T_n}\wedge (l_1+\I_{J_1})\wedge \cdots \wedge (l_k+\I_{J_k}),$$
where the $l_i's$ are piecewise affine functions, and $v_T$ is a convex function  in $\LC\NN $ whose domain coincides with that of $u$.  Our construction using support triangles implies that 
\begin{align*}
Z(v_T) & = \sum_{i=1}^n Z(v_{T_i}).
\end{align*}
Therefore,  by Lemma  \ref{lemma2}, Lemma \ref{lemma2a}, and Lemma \ref{singular}, and using that $Z$ is a simple valuation, we conclude that for every $\rho>0
$,
\begin{align*}
Z(u)&= \sum_{i=1}^n Z(u+\I_{P_{e_1} T_i})+\sum_{s=1}^k Z(u+\I_{J_s})\\
&\leq \sum_{i=1}^n \left(Z(v_{T_i})+\frac{\rho}{2}V_1(P_{e_1}T_i)\right)+c_{L_u}\sum_{s=1}^k V_1(\dom u\cap J_s) \\
& \leq Z(v_T)+\frac{\rho}{2} \sum_{i=1}^n V_1(P_{e_1}T_i)  + c_{L_u}\eta \\
& \leq Z(v_T)+\frac{\rho}{2} \sum_{i=1}^n V_1(P_{e_1}T_i) + \frac{\rho}{2} V_1(\dom u) \\
&\leq  Z(v_T)+\rho V_1(\dom u).
\end{align*}
Since $\rho$ is arbitrary, we conclude the proof of the proposition.
\end{proof}

By Proposition \ref{prop1} we get the following result. 

\begin{prop}\label{prop2}
Let $\zeta \in \conc$. Then there exists a unique functional $Z: \LC\NN \to \mathbb{R}$ such that:
\begin{enumerate}
\item [(i)] $Z$ is $\tau$-upper semicontinuous;
\item [(ii)] $Z$ is a simple, translation and dually epi-translation invariant valuation that vanishes on indicator functions of closed intervals;
\item [(iii)]  $Z(aq +\I_{[-m,m]})=2m\zeta(a)$ for all $a>0$ and $m>0$, where $q(x)=\frac{x^2}{2}$.
\end{enumerate}
\end{prop}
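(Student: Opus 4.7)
The plan is to prove existence by exhibiting an explicit functional, and uniqueness by reducing to piecewise linear-quadratic functions via Proposition \ref{prop1} and then evaluating $Z$ termwise.

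For existence, I would define $Z(u)=\int_{\dom u}\zeta(u''(x))\,\dif x$, with the convention $u''(x)=0$ wherever $u$ is not twice differentiable at $x$. By Lemma \ref{concave} specialized to $n=1$, this $Z$ is a translation and dually epi-translation invariant valuation, and Theorem \ref{mainteo} gives $\tau$-upper semicontinuity. Simplicity is clear since if $\dom u$ is a single point the integral vanishes. Vanishing on indicator functions of closed intervals follows from $u''\equiv 0$ together with $\zeta(0)=\lim_{t\to 0}\zeta(t)=0$. Finally, condition (iii) is a direct computation: for $u=aq+\I_{[-m,m]}$ one has $u''(x)=a$ on $(-m,m)$, giving $Z(u)=2m\,\zeta(a)$.

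For uniqueness, suppose $Z_1,Z_2$ both satisfy (i)--(iii). By Proposition \ref{prop1} each $Z_i$ is determined by its values on $P\LQ(\mathbb{R})$, so it suffices to check $Z_1=Z_2$ on $P\LQ(\mathbb{R})$. Given $u\in P\LQ(\mathbb{R})$, decompose $\dom u=\bigcup_{i=1}^k J_i$ into closed intervals with pairwise disjoint interiors, on each of which $u$ agrees with a quadratic or affine function $u_i$. Writing $v_i=u_i+\I_{J_i}$, we have $u=\bigwedge_{i=1}^k v_i$ as a convex function in $\LC\NN$. For any two adjacent pieces the join $v_i\vee v_{i+1}$ has a single-point domain and so is annihilated by any simple valuation; applying the valuation identity and induction on $k$ yields
\[
Z(u)=\sum_{i=1}^k Z(v_i).
\]

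It remains to compute $Z(v_i)$. If $u_i$ is affine, dual epi-translation invariance and vanishing on $\I_{J_i}$ give $Z(v_i)=0$. If $u_i(x)=\alpha_i q(x)+l_i(x)$ with $\alpha_i>0$ and $l_i$ affine, then dual epi-translation invariance gives $Z(v_i)=Z(\alpha_i q+\I_{J_i})$, and by equation (\ref{eq9})---already established from (ii) and Cauchy's functional equation via translation invariance and simplicity---this equals $\zeta(\alpha_i)V_1(J_i)$, which matches condition (iii). Thus $Z_1(v_i)=Z_2(v_i)$ for every piece, so $Z_1=Z_2$ on $P\LQ(\mathbb{R})$ and hence on all of $\LC\NN$. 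The main obstacle is conceptual rather than technical: one must recognize that the hypotheses of Proposition \ref{prop2} precisely activate the machinery already assembled (Proposition \ref{prop1} for reduction to $P\LQ$, identity (\ref{eq9}) for quadratic pieces), while the decomposition step requires careful bookkeeping of the simple-valuation cancellation at interior junction points of $\dom u$.
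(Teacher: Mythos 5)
Your proposal is correct and follows essentially the same route as the paper: uniqueness by reducing to piecewise linear-quadratic functions via Proposition~\ref{prop1} together with the identity \eqref{eq9} derived from Cauchy's functional equation, and existence via the functional $u\mapsto\int_{\dom u}\zeta(u''(x))\dif x$ using Lemma~\ref{concave} and Theorem~\ref{mainteo}. The only difference is presentational: the paper's proof of Proposition~\ref{prop2} records only the uniqueness argument (existence is verified afterwards, inside the proof of Proposition~\ref{prop}), while you additionally spell out the decomposition of a piecewise linear-quadratic function and the termwise evaluation that the paper leaves implicit.
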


\begin{proof}
Let  $Z: \LC\NN\rightarrow[0,+\infty)$ be a valuation that satisfies the conditions $(i)-(iii)$.  By Lemma \ref{lemma1},   $\zeta\in\conc$. Since $Z$ is a simple valuation and,  by  \eqref{eq4}, it is determined by $\zeta$ on piecewise linear-quadratic functions, we conclude that $Z(u)$ is determined by   $\zeta$ for every $u\in P\LQ(\mathbb{R})$. Furthermore, by  Proposition~\ref{prop1}, we conclude that $Z$ is uniquely determined by $\zeta$. 
\end{proof}

As a consequence of Lemma~\ref{lemma1} and Proposition~\ref{prop2}, we have the following proposition.

\begin{prop}\label{prop}
Let $Z: \LC\NN\rightarrow \mathbb R$ be a simple,  $\tau$-upper semicontinuous, translation and dually epi-translation invariant valuation which vanishes on indicator functions of closed intervals. Then there is a function $\zeta\in\conc$  such that
\begin{align*}
    Z(u)= \int_{\dom u} \zeta (u''(x))\dif x
\end{align*}
for every $u\in\LC\NN$.
\end{prop}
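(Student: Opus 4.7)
The plan is to assemble the pieces already established and invoke the uniqueness part of Proposition~\ref{prop2}. Given $Z$ as in the hypothesis, define $\zeta:[0,\infty)\to\mathbb{R}$ by $\zeta(0)=0$ and, for $a>0$, by
\begin{align*}
\zeta(a) = \frac{1}{2m} Z(a q + \I_{[-m,m]}),
\end{align*}
with $q(x)=x^2/2$, as in \eqref{eq4}. By the derivation leading to \eqref{eq9}, this value is independent of the choice of $m>0$, so $\zeta$ is well-defined. By Lemma~\ref{lemma1}, $\zeta \in \conc$.

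Next I would introduce the candidate functional $\tilde Z:\LC\NN\to\mathbb{R}$ by
\begin{align*}
\tilde Z(u) = \int_{\dom u} \zeta(u''(x))\,\mathrm{d}x,
\end{align*}
and verify that it satisfies the three hypotheses of Proposition~\ref{prop2}. The valuation property together with translation and dually epi-translation invariance follow from Lemma~\ref{concave} applied in the case $n=1$ (noting that $\det \D u = u''$ for $n=1$); the $\tau$-upper semicontinuity follows from Theorem~\ref{mainteo}. The functional $\tilde Z$ is simple since for $u\in\LC\NN$ whose domain is not full-dimensional we have $\dom u$ of Lebesgue measure zero; and it vanishes on indicator functions of closed intervals because $\I_J''\equiv 0$ on the interior of $J$ and $\zeta(0)=0$. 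Finally, for any $a>0$ and $m>0$, $(aq+\I_{[-m,m]})''(x)=a$ on $(-m,m)$, so
\begin{align*}
\tilde Z(aq+\I_{[-m,m]}) = \int_{-m}^{m} \zeta(a)\,\mathrm{d}x = 2m\zeta(a),
\end{align*}
which matches condition (iii) of Proposition~\ref{prop2}. The original $Z$ satisfies the same three conditions by hypothesis together with the definition of $\zeta$.

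Since both $Z$ and $\tilde Z$ satisfy conditions (i)--(iii) of Proposition~\ref{prop2}, the uniqueness statement of that proposition gives $Z = \tilde Z$ on all of $\LC\NN$, which is precisely the claim. There is no significant obstacle: all the hard analysis has been packed into Lemma~\ref{lemma1} (membership of $\zeta$ in $\conc$), Lemma~\ref{concave} and Theorem~\ref{mainteo} (good properties of the integral functional), and Proposition~\ref{prop2} (uniqueness via the approximation by piecewise linear-quadratic functions established in Proposition~\ref{prop1}). The present proposition is essentially the packaging step that identifies the abstract unique functional with the explicit integral representation.
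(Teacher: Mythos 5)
Your proposal is correct and follows essentially the same route as the paper: define $\zeta$ by \eqref{eq4}, use Lemma~\ref{lemma1} to get $\zeta\in\conc$, verify via Lemma~\ref{concave} and Theorem~\ref{mainteo} that the integral functional satisfies conditions (i)--(iii) of Proposition~\ref{prop2}, and conclude by the uniqueness statement of that proposition. Your write-up is in fact slightly more explicit than the paper's (e.g.\ setting $\zeta(0)=0$, checking simplicity and condition (iii) directly), but the argument is the same.
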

\begin{proof} Let $\zeta \in\conc$  be given by  \eqref{eq4}. Applying Lemma \ref{concave} and  Theorem~\ref{mainteo} for $n=1$,  we obtain that the functional $Z_{\zeta}: \LC\NN\rightarrow[0,+\infty)$ defined by 
\begin{align*}
    Z_{\zeta}(u)= \int_{\dom u} \zeta(u''(x))\dif x
\end{align*}
is a simple, $\tau$-upper semicontinuous,  translation and dually epi-translation invariant valuation. Moreover, it vanishes on the indicator functions of closed intervals, and for $q(x)= \frac{x^2}{2}$  and $a\geq 0$ satisfies 
\begin{align*}
 Z_{\zeta}(aq+\I_{[-m,m]}) = 2m\zeta(a).  
\end{align*}
Therefore, by Proposition \ref{prop2}, we conclude the proof.
\end{proof}

Finally, using Proposition~\ref{prop}, we prove Theorem~\ref{maintheorem}.

{\begin{proof}[\textit{Proof of Theorem \ref{maintheorem}}]
Let $x_0\in\mathbb R$. Since $Z$ is a translation invariant valuation, we have $Z(\I_{\{x_0\}})= c_0$ for some constant $c_0\in\mathbb{R}$ that does not depend on $x_0$. Define
\begin{align*}
    Z_0(u)=Z(u)-c_0.
\end{align*}
Then $Z_0$ is a simple, $\tau$-upper semicontinuous, translation, and dually epi-translation invariant valuation.
By Lemma \ref{non-positive}, there exists a constant $c_1\in\mathbb{R}$ such that 
$Z_0(\I_J)=c_1V_1(J)$ for every closed interval $J\subset\mathbb{R}$. Furthermore, by Lemma \ref{volume2} and  Lemma \ref{volume1}, the functional
$Z_1(u)=Z_0(u)-c_1V_1(\dom u)$ is a simple, $\tau$-upper semicontinuous, translation, and dually epi-translation invariant valuation, which vanishes on indicator functions of closed intervals. The proof now follows from  Proposition \ref{prop}, which guarantees the existence of a function $\zeta\in\conc$  such that
\begin{align*}
    Z_1(u)= \int_{\dom u} \zeta (u''(x))\dif x
\end{align*}
for every $u\in \LC\NN$.
\end{proof}

\section*{Acknowledgments}
The author thanks Monika Ludwig for  insightful discussions and careful reading of the manuscript. The author also thanks Fabian Mussnig for helpful discussions and suggestions. This project was supported, in part, by the Austrian Science Fund (FWF) Grant-DOI: 10.55776/P34446 and Grant-DOI: 10.55776/P37030. For open access purposes, the author has applied a CC BY public copyright license to any author-accepted manuscript version arising from this submission.

\vspace{0.5cm}
\end{document}